\author{Ian Shipman}
\date{\today}
\def\mc{\mathcal}
\def\frk{\mathfrak}
\def\wt{\widetilde}
\def\wh{\widehat}
\def\mb{\mathbf}
\def\ol{\overline}
\def\onto{\twoheadrightarrow}
\def\into{\hookrightarrow}
\def\from{\leftarrow}
\def\tensor{\otimes}
\def\bt{\bullet}
\def\he{\simeq}
\def\del{\partial}
\DeclareMathOperator{\Hom}{Hom}
\DeclareMathOperator{\End}{End}
\DeclareMathOperator{\Der}{Der}
\def\sHom{\mc{H}om}
\def\sExt{\mc{E}xt}
\def\sEnd{\mc{E}nd}
\DeclareMathOperator{\grHom}{grHom}
\DeclareMathOperator{\Perf}{\frk{Perf}}
\DeclareMathOperator{\Dcoh}{\mb{D}^b coh}
\DeclareMathOperator{\coh}{coh}
\DeclareMathOperator{\Qcoh}{Qcoh}
\DeclareMathOperator{\spec}{Spec}
\DeclareMathOperator{\Sym}{Sym}
\def\invlim{\underleftarrow{\lim}}
\DeclareMathOperator{\id}{id}
\DeclareMathOperator{\coker}{coker}
\DeclareMathOperator{\im}{im}
\DeclareMathOperator{\cn}{cone}
\def\dim{\text{dim}}
\def\svC{\check{\mc{C}}}
\def\Cx{\C^\times}
\DeclareMathOperator{\GL}{GL}
\DeclareMathOperator{\SL}{SL}
\def\Z{\mathbb{Z}}
\def\C{\mathbb{C}}
\def\P{\mathbb{P}}
\def\A{\mathbb{A}}
\def\cC{\mc{C}}
\def\cE{\mc{E}}
\def\cF{\mc{F}}
\def\cG{\mc{G}}
\def\cH{\mc{H}}
\def\cI{\mc{I}}
\def\cK{\mc{K}}
\def\cL{\mc{L}}
\def\O{\mc{O}}
\def\cP{\mc{P}}
\def\cQ{\mc{Q}}
\def\cV{\mc{V}}
\def\cW{\mc{W}}
\def\bD{\mathbf{D}}
\def\Ft{\wt{F}}
\def\St{\wt{S}}
\def\Zh{\wh{Z}}
\def\Sbr{\overline{S}}
\def\sfH{{\sf H}}
\def\vC{\check{C}}
\theoremstyle{plain}
\newtheorem{thm}{Theorem}[section]
\newtheorem*{thm*}{Theorem}
\newtheorem{lma}[thm]{Lemma}
\newtheorem{prop}[thm]{Proposition}
\newtheorem{cor}[thm]{Corollary}
\theoremstyle{definition}
\newtheorem{dfn}[thm]{Definition}
\newtheorem{rmk}[thm]{Remark}
\newtheorem*{rmk*}{Remark}
\newtheorem{exmp}[thm]{Example}
\title{A Geometric approach to Orlov's theorem}
\DeclareMathOperator{\DBr}{DBr^\Gamma}
\def\GIT{\sslash}
\def\OZgr{\O_{\Zh}^{gr}}
\def\S{\cK}
\DeclareMathOperator{\DQcoh}{\mb{D}Qcoh^\Gamma}
\DeclareMathOperator{\grDcoh}{\mb{D}coh^\Gamma}
\DeclareMathOperator{\grQcoh}{Qcoh^\Gamma}
\DeclareMathOperator{\MF}{MF^\Gamma}
\DeclareMathOperator{\DMF}{\mb{D}MF^\Gamma}
\begin{document}

\maketitle

\begin{abstract}
A famous theorem of D. Orlov describes the derived bounded category of coherent sheaves on projective hypersurfaces in terms of an algebraic construction called graded matrix factorizations.  In this article, I implement a proposal of E. Segal to prove Orlov's theorem in the Calabi-Yau setting using a globalization of the category of graded matrix factorizations (graded D-branes).  Let $X \subset \P$ be a projective hypersurface.  Already, Segal has established an equivalence between Orlov's category of graded matrix factorizations and the category of graded D-branes on the canonical bundle $K_{\P}$ to $\P$.  To complete the picture, I give an equivalence between the homotopy category of graded D-branes on $K_{\P}$ and $\Dcoh(X)$.  This can be achieved directly and by deforming $K_{\P}$ to the normal bundle of $X \subset K_{\P}$ and invoking a global version of Kn\"{o}rrer periodicity.  We also discuss an equivalence between graded D-branes on a general smooth quasi-projective variety and on the formal neighborhood of the singular locus of the zero fiber of the potential.
\end{abstract}

\tableofcontents

\section{Introduction}
The work in this paper was motivated by an attempt to understand Theorem 3.11 in \cite{Or} proposed by Ed Segal in \cite{Se}.  In order to state the theorem, we need to set up some elementary preliminaries.  Let $X \subset \P^n$ be a projective hypersurface of degree $w$ with defining equation $W \in R := \C[x_0,\dotsc,x_n]$.  The version of Orlov's theorem in which we are interested is a comparison between $\Dcoh(X)$ and the category of graded matrix factorization of $W$.

In order to define the category of graded matrix factorizations, it is convenient to first define the plain category of (ungraded) matrix factorizations (after \cite{E}), then describe the modifications necessary to yield the graded case.  A \emph{matrix factorization} of $W$ is a $\Z/2\Z$ graded free $R$ module $P$ together with an odd (or degree 1) $R$-linear endomorphism $d_P$ which satisfies $d_P^2 = W \cdot \id_P$.  If $P,Q$ are matrix factorizations then $\Hom_R(P,Q)$ is again a $\Z/2\Z$ graded free $R$ module.  Moreover there is a natural differential on $\Hom_R(P,Q)$ given on homogeneous morphisms $\phi$ by $d(\phi)=d_Q \circ \phi - (-1)^{\deg(\phi)} \phi \circ d_P$.  Thus, $\Hom_R(P,Q)$ is a $\Z/2\Z$ graded complex.  The \emph{category $MF(W)$ of matrix factorizations} is the $\C$-linear category whose objects are matrix factorizations of $W$ and where the vector space of morphisms between $P$ and $Q$ is $\sfH^0( \Hom_R(P,Q), d )$.  This category admits a natural triangulated structure in which $[2] \cong \id$.

Now, view $R$ as a graded ring with $\deg(x_i)=1$.  If $M$ is a graded $R$ module, we write $M(k)$ for the shifted module with $M(k)_i = M_{k+i}$.  A grading-preserving map $\phi:M \to M'$ induces a grading preserving map $M(k) \to M'(k)$, which we denote by $\phi(k)$.  Let $P$ be a matrix factorization of $W$ whose underlying module is graded.  Note that since the degree of $W$ is $w \neq 0$, $d$ cannot be graded.  However since $P$ is $\Z/2\Z$ graded as well, the map $d$ breaks into a pair of maps
\[ P_0 \stackrel{\alpha}{\to} P_1 \stackrel{\beta}{\to} P_0. \]
The matrix factorization $P$ is a \emph{graded matrix factorization} when $\alpha$ is graded and $\beta$ has degree $w$ in the sense that $\alpha:P_1 \to P_0(w)$ is graded.  The \emph{category $\MF(W)$ of graded matrix factorizations} (after \cite{Or, HoWa}) is the category whose objects are graded matrix factorizations and where the space of morphisms between $P$ and $Q$ is $\sfH^0(\Hom_R^{gr}(P,Q))$.  The superscript $^{gr}$ denotes the subspace of degree-preserving module maps.  The category of graded matrix factorizations also admits a triangulated structure.  However the ``homological'' shift interacts non-trivially with the $R$ module shift in the sense that $[2] \cong (w)$.  So, the category of graded matrix factorizations is fully $\Z$ graded.

\begin{thm*}[3.11 \cite{Or}]
Let $X \subset \P^n$ be a nonsingular hypersurface of degree $w$ with defining equation $W$.
\begin{enumerate}
 \item If $w < n+1$ then there is a fully faithful exact functor $\MF(W) \to \Dcoh(X)$.
 \item If $w = n+1$ then there is an equivalence $\Dcoh(X) \cong \MF(W)$.
 \item If $w > n+1$ then there is a fully faithful exact functor $\Dcoh(X) \to \MF(W)$.
\end{enumerate}
\end{thm*}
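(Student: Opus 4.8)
The plan is to deduce Orlov's theorem from the comparison results outlined in the abstract rather than by directly manipulating graded modules as in \cite{Or}. I would pass through the triangulated category of graded D-branes on the total space $\tot(K_\P) = \Spec_\P \Sym(\O_\P(w))$, equipped with the potential pulled back from $W$ together with the natural $\Gm$-action scaling the fibers. By Segal's theorem \cite{Se}, there is an equivalence $\MF(W) \cong \DBr(K_\P, W)$ between Orlov's category of graded matrix factorizations and the homotopy category of graded D-branes on $K_\P$. So the content of the theorem becomes a comparison between $\DBr(K_\P, W)$ and $\Dcoh(X)$, together with a bookkeeping of exactly when this comparison is an equivalence versus a fully faithful embedding in one direction.

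First I would set up the global D-brane category carefully: objects are $\Gm$-equivariant coherent factorizations of the section $W \in \Gamma(\tot(K_\P), \O)$, and I would record the shift/twist relation $[2] \cong (w)$ so that it matches the convention on $\MF(W)$. Next I would realize $X \subset \tot(K_\P)$ — more precisely, $X$ sits inside the zero fiber $\{W = 0\}$ as (part of) its singular locus when $X$ is nonsingular — and construct a functor $\DBr(K_\P, W) \to \Dcoh(X)$ by sending a matrix factorization to the cohomology of the corresponding two-periodic complex, localized near $X$; this is the global avatar of the classical ``cokernel'' construction. The key step is to compute the essential image and the behaviour on Hom-spaces. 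Here I would deform $\tot(K_\P)$ to the normal bundle $N_{X/\tot(K_\P)}$ and invoke a global Knörrer periodicity (as advertised in the abstract) to reduce the D-brane category on $K_\P$ with its quadratic-type potential in the normal directions to $\Dcoh(X)$ itself, up to the contribution of the ambient $\P$.

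The Calabi--Yau hypothesis $w = n+1$ enters precisely at the point where one must control this ambient contribution: it is exactly the condition $K_\P \cong \O_\P(-n-1) \cong \O_\P(-w)$ that makes the twist by $(w)$ on the matrix-factorization side correspond to the canonical Serre-twist ambiguity on $\Dcoh(X)$, so that no extra generators survive and no generators are lost — giving a genuine equivalence. When $w < n+1$ the potential is ``too positive'' and the D-brane category only sees a full triangulated subcategory of $\Dcoh(X)$ generated by the restrictions of $\O_\P(i)$ for a truncated range of $i$, yielding the fully faithful functor $\MF(W) \to \Dcoh(X)$; when $w > n+1$ the inclusion goes the other way by a dual argument (semiorthogonal decomposition of $\Dcoh(\tot(K_\P))$ or of the D-brane category along powers of $\O_\P$). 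I expect the main obstacle to be making the global Knörrer periodicity statement precise enough to track Hom-spaces and the grading, and then carefully matching the numerical truncation ranges on both sides so that the three cases $w \lessgtr n+1$ fall out cleanly; the formal-neighborhood comparison mentioned at the end of the abstract should make the localization near $X$ rigorous.
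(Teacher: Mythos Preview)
Your overall strategy for part~(2) --- passing from $\MF(W)$ through Segal's window equivalence to graded D-branes on the total space of $\O_\P(-w)$, and then comparing that category to $\Dcoh(X)$ via deformation to the normal bundle and global Kn\"orrer periodicity --- is exactly the route the paper takes. One terminological correction: the relevant line bundle is $\O_\P(-w)$, not $K_\P$; these coincide only when $w=n+1$, which is precisely the Calabi--Yau hypothesis, so writing $K_\P$ throughout silently assumes the conclusion you want.

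For parts~(1) and~(3) there is a genuine gap. The paper does \emph{not} prove those cases; it explicitly restricts to the Calabi--Yau setting and cites the full trichotomy to Orlov. Your sketch cannot be completed as written because Segal's theorem, as stated and used here, gives an \emph{equivalence} $\DBr([\A^{n+1}/\mu_w],W)\he \DBr(Y,W)$ only when the sum of the $\Cx$-weights vanishes, i.e.\ when $w=n+1$. Outside that case Segal's argument yields a fully faithful functor in one direction together with an explicit orthogonal, not an equivalence, and those orthogonal pieces are exceptional objects attached to the ambient weighted projective space rather than to $X$. So the sentence ``the D-brane category only sees a full triangulated subcategory of $\Dcoh(X)$ generated by the restrictions of $\O_\P(i)$'' is not correct: the geometric equivalence $i_*p^*:\Dcoh(X)\to \DMF(Y,W)$ of Theorem~\ref{main-equivalence} is always an equivalence, regardless of $w$; it is the Segal step relating $\DMF(Y,W)$ to $\MF(W)$ that breaks, and the discrepancy lives on the $\MF$ side (for $w>n+1$) or is absorbed into $\Dcoh(X)$ via a different mechanism than you describe (for $w<n+1$). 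Tracking those orthogonals is essentially Orlov's original computation and is not supplied by the machinery of this paper.

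A minor point on the Calabi--Yau case: the paper's comparison functor runs as $i_*p^*:\Dcoh(X)\to \DMF(Y,W)$, pushing forward from $X$ along the zero section, whereas you propose the cokernel-type functor in the opposite direction. These are inverse in this situation, but the paper's choice is cleaner because $i_*p^*$ is manifestly exact and fully-faithfulness can be checked directly on the generators $\O_X(j)$ against the Koszul branes $\cK(j)$.
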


\begin{rmk}
This is an imprecise version of Orlov's theorem.  He not only constructs the fully faithful comparison functors but also describes the orthogonals to their essential images.
\end{rmk}

Inspired by the work of Witten \cite{W93}, Segal proposed an alternative proof of this theorem, focusing on the Calabi-Yau case.  There are two parts to his proposition and he completed the first part in his article \cite{Se}. He first provides a framework for studying graded matrix factorizations over schemes and stacks.  In this context, we adopt the terminology of the physics community and refer to these analogs of graded matrix factorizations as graded D-branes.  In this framework, a DG category $\DBr(Y,F)$ is associated to any scheme (or algebraic stack) $Y$ equipped with a regular function $F$ and some other data responsible for the graded structure.  We will discuss the construction in the next section and in this section we will ignore the grading data to simplify the presentation.  Let $\mu_w$ be the group of $w$-th roots of unity acting by scaling on $\A^{n+1}$.  The DG category associated to the quotient stack $[\A^{n+1}/\mu_w]$ and $W$ (which descends to the quotient stack since  $W$ is invariant under $\mu_w$) is a DG enhancement of the triangulated category of graded matrix factorizations.

Write $\P = \P^n$.  We can view $W$ as a section of $\O_\P(w)$ and thus as a regular function $W$ on $Y= \O_\P(-w)$ which is linear on the fibers of the projection $Y \to \P$.  There is a canonical birational morphism $Y \to \A^{n+1}\GIT\mu_w$ and the function $W$ on $Y$ is the pullback of the function induced by $W$ on $\A^{n+1}\GIT \mu_w$.  However there is a closer relationship between $Y$ and $[\A^{n+1}/\mu_w]$.  They are the GIT quotients of $\A^{n+1} \times \A^1$ under the action of $\C^\times$ with respect to the identity and inverse characters and their distinguished functions descend from a function on $\A^{n+1} \times \A^1$.  Segal formulates a principle that if $X_1 \dashrightarrow X_2$ is a rational, birational morphism between Calabi-Yau stacks identifying $W_1$ and $W_2$, then there should be a corresponding quasi-equivalence $\DBr(X_1,W_1) \he \DBr(X_2, W_2)$.  This motivates the following theorem.

\begin{thm*}[3.3 \cite{Se}]
Assume that $w = n+1$.  Then there is a family of quasi-equivalences $$\DBr([\A^{n+1} / \mu_w],W) \he \DBr(Y,W)$$ indexed by $\Z$.
\end{thm*}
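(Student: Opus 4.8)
The plan is to realize both sides as restrictions of a single category of graded D-branes on the stack obtained before taking any GIT quotient. Let $\Cx$ act on $\A^{n+1}\times\A^1$, with coordinates $(x_0,\dots,x_n,p)$, by the weights $(1,\dots,1,-w)$, and put $f=p\cdot W(x)$: this has $\Cx$-weight $w+(-w)=0$, hence descends to $\mathcal{Z}=[(\A^{n+1}\times\A^1)/\Cx]$ together with the $\Gamma$-grading data (which I carry along silently). Two GIT quotients arise from the two chambers of the $\Cx$-character lattice. For a positive character the unstable locus is $\{x_0=\cdots=x_n=0\}$ and the quotient is $\tot(\O_\P(-w))=Y$, on which $f$ becomes the linear-on-fibers function induced by $W$; for a negative character the unstable locus is $\{p=0\}$ and the quotient is $[\A^{n+1}/\mu_w]$ — the residual $\mu_w$ being the generic $\Cx$-stabilizer on $\A^1\setminus 0$, where $\Cx$ acts with weight $-w$ — on which $f$ becomes $W$. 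The hypothesis $w=n+1$ says exactly that the sum of the $\Cx$-weights is $(n+1)-w=0$; this Calabi--Yau balancing is what makes the two chambers symmetric and, below, yields a family indexed by $\Z$ rather than a single equivalence.

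For each $k\in\Z$ let $\cW_k\subset\DBr(\mathcal{Z},f)$ be the full subcategory of branes obeying the grade-restriction rule at both Kempf--Ness strata, with window the $w$ consecutive weights $\{k,k+1,\dots,k+w-1\}$: concretely, whose underlying $\Cx$-equivariant curved module, pulled back to the fixed locus (the origin) of each destabilizing one-parameter subgroup and decomposed into weight spaces, is supported in that window. When $w=n+1$ this window has exactly the right width for $\cW_k$ to be nonempty and, in fact, the correct size. The claim is that the two restriction (localization) functors $\cW_k\to\DBr(Y,W)$ and $\cW_k\to\DBr([\A^{n+1}/\mu_w],W)$ are both quasi-equivalences. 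Granting this, the composite of one with a quasi-inverse of the other is the desired $\DBr([\A^{n+1}/\mu_w],W)\he\DBr(Y,W)$, and letting $k$ vary produces the $\Z$-family (the different $\cW_k$ are interchanged by the shift $(1)$ on the graded-matrix-factorization side, and correspondingly by twisting with $\O_\P(1)$ on the $Y$ side).

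That the restriction functors are quasi-equivalences is the matrix-factorization analogue of the variation-of-GIT theorem (Herbst--Hori--Page at the physical level, made rigorous for derived categories by Segal and for D-brane/matrix-factorization categories by Halpern-Leistner and by Ballard--Favero--Katzarkov). For essential surjectivity one lifts an object of $\DBr(Y,W)$ arbitrarily to $\mathcal{Z}$ and, using the Koszul resolution of $\O_{\{x=0\}}$ (the ideal $(x_0,\dots,x_n)$ is a regular sequence and $f$ vanishes on $\{x=0\}$), replaces it by a quasi-isomorphic brane whose weights all lie in the prescribed window; the ``defect'' one has to absorb has width exactly $w$, which is why a single translate of the window suffices. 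For full faithfulness one computes, for $\mathcal{E},\mathcal{F}\in\cW_k$, the cone of $\Hom_{\mathcal{Z}}(\mathcal{E},\mathcal{F})\to\Hom_Y(\mathcal{E}|_Y,\mathcal{F}|_Y)$ by local cohomology along $\{x=0\}$; the two-sided weight window forces the relevant $\Cx$-weight spaces of this complex to vanish (here $f|_{\{x=0\}}=0$ collapses the curved local-cohomology spectral sequence in the needed range). The symmetric argument, with $\{x=0\}$ and $\{p=0\}$ interchanged, handles $[\A^{n+1}/\mu_w]$. One should also check at the outset that $f=pW$ is such that all the categories $\DBr(-,f)$ in sight are idempotent complete and generated by the relevant equivariant line bundles, so that the functors may be tested on generators.

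I expect the main obstacle to be running this window machinery in the \emph{curved} (matrix-factorization) setting while keeping track of the $\Gamma$-grading and of the orbifold point on the Landau--Ginzburg side — in particular, verifying that the $\mu_w$-stabilizer does not obstruct the grade-restriction rule and that the window widths on the two sides are compatible with the $[2]\cong(w)$ periodicity, so that the family is genuinely indexed by $\Z$ and not merely by $\Z/w$. A related technical point is establishing the needed vanishing of the local cohomology in the relevant weights — equivalently, the relevant semiorthogonal decomposition of $\DBr(\mathcal{Z},f)$ with $\cW_k$ as a piece — directly for curved sheaves rather than by reduction to the underlying ordinary derived categories.
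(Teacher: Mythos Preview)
Your approach is correct and is essentially the same as the one the paper attributes to Segal. Note, however, that the paper does \emph{not} give its own proof of this statement: it is quoted from \cite{Se} in the introduction and again in Section~\ref{s.app}, where the setup is spelled out but the proof is omitted. In that section the paper introduces the full subcategory $\mc{G}_t \subset \DBr([V/G],F)$ consisting of graded D-branes whose underlying $G$-equivariant bundle is a direct sum of the character line bundles $\O_V(t),\dotsc,\O_V(t+N-1)$; this is precisely your window category $\cW_k$ (with $N=n+1=w$ and $t=k$), and the paper's diagram
\[
\xymatrix{ \DBr([V_-/G],W) & \ar[l]_(.2){j^*} \mc{G}_t \ar[r]^(.35){j^*} & \DBr(Y,W) }
\]
is exactly your pair of restriction functors. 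So your proposal reconstructs Segal's grade-restriction argument rather than offering an alternative, and the ``obstacles'' you flag at the end are the genuine technical content of Segal's proof rather than gaps in your strategy.
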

\begin{rmk}
Segal's theorem holds for any action of $\Cx$ on an vector space where the sum of the weights is zero.  In fact his method establishes a trichotomy as in Orlov's theorem, depending on the sign of the sum of the weights.
\end{rmk}

The next step in the proposal is to construct an equivalence $[\DBr(Y,W)] \he \Dcoh(X)$.  We achieve this by a direct argument.  However we also explore a geometric technique in which the data used to define the categories mutates in a pleasant way and the functors used to compare the categories are extremely simple.  With respect to the embedding of $X$ into $Y$ along the zero section, the normal bundle $N_{X/Y}$ is isomorphic to $\O_X(w) \oplus \O_X(-w)$.  Let $p$ be a local coordinate on $Y=\O_\P(-w)$ which is linear along the fiber and which vanishes to first order along the zero section.  Then locally $W$ has the form $fp$ where $f$ is a function that is constant along the fibers and which vanishes to first order along $X$.  So $W$ vanishes to second order along $X$ and upon degeneration induces a homogeneous function $\bar{W}$ of weight 2 on the normal bundle.  In fact the induced function is a tautological function, the distinguished section of $\O_X(w) \tensor \O_X(-w) \subset \Sym^2 N^\vee_{X/Y}$ corresponding to the trivialization $\O_X(w)\tensor\O_X(-w) \cong \O_X$.

In order to establish a quasi-equivalence $\DBr(Y,W) \he \Perf(X)$ using geometry, we first establish a quasi-equivalence $\DBr(Y,W) \he \DBr(N_{X/Y},\bar{W})$.  A standard construction, the deformation to the normal cone, gives a deformation of $Y$ into $N_{X/Y}$ over $\A^1$.  Generalizing the case of interest, we suppose that $Y$ is a the total space of a vector bundle $\mc{V}$ on a nonsingular variety $Z$ and that $W$ is induced by a regular section of $\mc{V}^\vee$.  In this situation, the degeneration of $W$ to $N_{X/Y}$ is again a certain tautological function whose theory is easy to control.  We show that the deformation to the normal cone can be used to construct a quasi-equivalence $\DBr(N_{X/Y},\bar{W}) \to \DBr(Y,W)$.  

One of the fundamental properties of matrix factorizations is Kn\"{o}rrer periodicity \cite{Kn} which states that the category of matrix factorizations of a nondegenerate quadratic form $Q$ on an even dimensional vector space is equivalent to the derived category of vector spaces.  Moreover, given an isotropic splitting of the vector space there is a natural realization of this equivalence.  The geometry of $(N_{X/Y},\bar{W})$ is that of a family of nondegenerate quadratic forms and $N_{X/Y}$ has a built in isotropic splitting.  To obtain the final equivalence $\DBr(N_{X/Y},\bar{W}) \he \Perf(X)$ we simply globalize Kn\"{o}rrer's functor and verify that it gives an equivalence.

This paper is organized as follows.  Section \ref{s.const} contains a detailed construction of the category of graded D-branes and discusses the basic operations on these categories and the extra structures these categories enjoy.  In section \ref{s.vb} we consider a general situation of which $(Y,W)$ is example and we prove the main equivalence.  A simple special case of this theorem is a global version of Kn\"{o}rrer periodicity.  Then in section \ref{s.dnb} we discuss the invariance of the category of graded D-branes (up to summands) under deformation to the normal cone.  Section \ref{s.app} brings the results of the previous sections together to obtain a proof of Orlov's theorem.  We also discuss implications for complete intersections.  Finally in \ref{s.loc}, which is somewhat independent from the other sections, we show that the category of D-branes only depends on a formal neighborhood of the critical locus of the potential, when the ambient space is nonsingular and quasi-projective.

\begin{rmk}
There is a strong parallel to this story in the work of Isik \cite{Is}.  He considers the situation where $Y$ is the total space of a vector bundle $\mc{V}$ on a nonsingular variety $Z$ and there is distinguished regular section $s$ of $\mc{V}^\vee$, defining a function $W$ on $Y$.  View $Y_0 = W^{-1}(0)$ as a $\Cx$-scheme via the action by scaling along the fibers of the projection $Y \to Z$.  Isik works with the $\Cx$ equivariant singularity category which is the Verdier quotient ${\sf DSg}^{gr}(Y_0) = D^b\coh^{gr}(Y_0)/\Perf^{gr}(Y_0)$.  Using a version of Koszul duality, he shows that ${\sf DSg}^{gr}(W^{-1}(0))$ is equivalent to $D^b\coh(X)$ where $X$ is the vanishing locus $X = s^{-1}(0) \subset Z$ of the regular section $s$.  Baranovsky and Pecharich \cite{BP} use Isik's theorem to find identifications between graded singularity categories and derived categories of pairs of toric stacks, generalizing a result of Orlov \cite[2.14]{Or}, and interpret their results as a version of the McKay correspondance.
\end{rmk}

\begin{rmk}
The interest in categories of matrix factorizations is partly due to their role in expansions of the Homological Mirror Symmetry program.  Specifically, when $X$ is not Calabi-Yau the mirror object may not be a plain variety but rather a Landau-Ginzburg model consisting of a variety $\hat{X}$ together with a nonconstant regular function $W$ on $X$.  In this picture the category of (graded) matrix factorizations plays the role of the derived category of coherent sheaves on the noncommutative mirror space and should be equivalent to a Fukaya type category.
\end{rmk}

\subsubsection*{Acknowledgements}  My gratitude to Victor Ginzburg for introducing me to graded matrix factorizations and for many wonderful discussions and suggestions.   It was his insight to use the deformation to the normal bundle.  I wish to thank M. Umut Isik, Jesse Burke, and especially Toly Preygel for very interesting discussions.  Furthermore, I greatly appreciate the contribution Ed Segal and Toly Preygel made by pointing out serious mistakes in the various drafts of this article.  Finally, thanks to Igor Dolgachev and Mitya Boyarchenko for bringing my attention to Sumihiro's theorem.

\section{Construction of the category of graded D-branes}\label{s.const}
To begin, we will describe the geometric data required to study graded matrix factorizations.  We first discuss group actions and equivariant sheaves.  A variety is a seperated, integral scheme of finite type over $\C$.  Let $\Sigma$ be a variety and $G$ either a torus or a finite group acting on $\Sigma$.  If $\Sigma$ is normal then it admits a $G$-invariant affine open cover, by Sumihiro's theorem \cite{Su}.  Recall that a $(\Cx)^r$ action on $\spec(R)$ is the same as a $\Z^r$ grading on $R$.  Moreover, a quasicoherent sheaf on $[\spec(R)/(\Cx)^r]$ is simply a $\Z^r$ graded module over $R$.  So when $G$ is a torus, we can understand $\Sigma$ through a system of charts where each chart is the spectrum of a $\Z^r$ graded ring.

Suppose that $\chi$ is a character of $G$ and $\C_\chi$ is the corresponding one dimensional representation.  There is a shifting operation corresponding to $\chi$.  If $\cF$ is a $G$-equivariant quasicoherent sheaf on $\Sigma$, then the $\chi$-shift is defined to be $\cF(\chi) = \cF \tensor_\C \C_\chi$.  In the case where $G \cong (\Cx)^r$ acts on $\spec(R)$ a character $\chi$ is described by a vector $\mb{v} \in \Z^r$.  The $\chi$-shift on a $\Z^r$ graded module $M$ is given by the $\mb{v}$ shift so that $M(\chi)_{\mb{u}} = M_{\mb{u} + \mb{v}}$.  Let $\cF_1,\cF_2$ be $G$ equivariant quasicoherent sheaves on $\Sigma$.  For any character $\chi$ of $G$ we refer to a morphism $\cF_1 \to \cF_2(\chi)$ as a morphism $\cF_1 \to \cF_2$ of weight $\chi$.  In particular, morphisms $\O_\Sigma \to \cF(\chi)$ are global sections of $\cF$ of weight $\chi$.  Alternatively, a global section $s$ of $\cF$ has weight $\chi$ if $g^*s = \chi(g)s$ for all $g \in G$.

In this article we work with stacks $S$ of the form $[\Sigma_S/G]$ where $\Sigma_S$ is a variety and $G$ is a torus or a finite abelian group.  The case where $G$ is trivial is an important case.  Recall that a quasicoherent sheaf $\cF$ on $S$ is a $G$-equivariant quasicoherent sheaf on $\Sigma_S$.  By vector bundle on $S$ we mean a $G$-equivariant locally free sheaf of finite rank on $\Sigma_S$.  Global sections of a sheaf on $S$ are the $G$-equivariant sections of the corresponding $G$-equivariant sheaf on $\Sigma_S$.  If $\cF$ is a $G$-equivariant sheaf on $\Sigma_S$ then the equivariant global sections are just the global sections of $\cF$ that are fixed by $G$.

Fix a stack $S = [\Sigma_S/G]$ and put $\Gamma := \Cx$.  The first piece of geometric data that we need is an \emph{even} $\Gamma$ action on $S$.  This is an action of $\Gamma$ on $\Sigma_S$ that commutes with the $G$ action and is such that $\{-1,1\} \subset \Gamma$ acts trivially.  Suppose that $U \cong \spec(R)$ is a $\Gamma \times G$ invariant open affine set.  Then the $\Gamma$ action corresponds to a $\Z$ grading on $R$ and the second condition is equivalent to $R$ being concentrated in even degrees.  We use the notation $\Gamma$ to distinguish this action from the $G$ action.  In the main case of interest for this paper, $G \cong \Cx$.  We refer to the grading induced by $\Gamma$ as the $\Gamma$ grading.  (It is denoted by ``R-charge'' in Segal's work and in the physics literature.)

By \emph{invariant open affine subset of $S$} we mean a $\Gamma \times G$ invariant affine open subset of $\Sigma_S$.  It should be understood that all sheaves and morphisms on such an open affine set are meant to be $G$-equivariant, unless something to the contrary is explicitly stated.

The second piece of geometric data that we need is that of a regular function $F$ on $S$ of $\Gamma$ weight 2.  This is a $G$ invariant regular function on $\Sigma_S$ which has weight 2 for the $\Gamma$ action.  So the restriction of $F$ to any $\Gamma \times G$ invariant open set has $\Gamma$ degree 2.

\begin{dfn}[LG pair]
An \emph{Landau-Ginsburg (LG) pair} $(S,F)$ is a stack $S$ (as above) with an even action of $\Gamma$ and a regular function $F$ on $S$ that has $\Gamma$-weight 2.
\end{dfn}

\begin{exmp}\label{main.example}
In the context of this article, the most important example of an LG pair comes from a variety $Y$ with a vector bundle $\cV$ over it and a section $s$ of $\cV$.  Let $S$ be the total space of $\cV^\vee$, the dual vector bundle and $F$ the function on $\cV^\vee$ that corresponds to $s$.  This function is linear when restricted to each fiber of $S$ over $Y$.   Since $S$ is the total space of a vector bundle it carries a $\Gamma$ action.  However this action is not even, so we ``double'' it by letting $\lambda \in \C$ act by $\lambda^2$.  The local structure of this example is quite simple.  If $U \subset S$ is an open affine subset then $p^{-1}(U)$ is a $\Gamma$ invariant affine set.  Moreover, if $\cV$ is trivialized over $U$ and $p:S \to Y$ denotes the projection then $p^{-1}(U) \cong \spec( \O_Y(U)[y_1,\dotsc,y_r])$ where $y_i$ are the coordinates on $\cV$ given by the trivialization.  The grading assigns $\deg(y_i) = 2$ and $\deg(\O_Y(U)) = 0$.
\end{exmp}

\begin{dfn}[Graded D-brane]
Suppose $(S,F)$ is an LG pair.  A \emph{graded D-brane} on $(S,F)$ is a $\Gamma$-equivariant vector bundle $\cE$, together with an endomorphism $d_{\cE}$ of degree 1 such that $d_\cE^2 = F \cdot \id_\cE$.
\end{dfn}
Recall that a $\Gamma$-equivariant vector bundle on $S$ is a $\Gamma \times G$-equivariant vector bundle on $\Sigma_S$.  Let $U \cong \spec(R_\bt)$ be a $\Gamma$ invariant affine open subset of $S$ and $\cE$ a graded D-brane on $(S,F)$.  Then $\cE(U)$ is simply a graded, projective $R_\bt$-module with an endomorphism that raises degrees by 1 and squares to multiplication by $F$.  (According to our convention, $\cE(U)$ and $d_{\cE}$ must be $G$-equivariant.)  For any $\Gamma$-equivariant sheaf $\cF$ on $S$, let $\sigma$ be the endomorphism induced by the action of $-1 \in \Gamma$.  The action of $\sigma$ on a homogeneous $m \in \cF(U)$ is by $\sigma(m) = (-1)^{\deg(m)}m$.  Let $\cE_1,\cE_2$ be two graded D-branes on $(S,F)$.  We define an endomorphism of $\sHom(\cE_1,\cE_2)$ by $d(\phi) = d_2 \circ \phi - \sigma(\phi) \phi \circ d_1$.  Note that $d^2 = 0$ so the graded $R_\bt$ module $\sHom(\cE_1,\cE_2)(U)$ can be viewed as a complex of DG $R_\bt$ modules, where the differential on $R_\bt$ is zero.  We may define the $\Gamma$-equivariant coherent sheaf
\[ \cH(\sHom(\cE_1,\cE_2)) := \ker(d)/\im(d). \]
We have $\cH(\sHom(\cE_1,\cE_2))(U) = \sfH^\bt( \sHom(\cE_1,\cE_2)(U))$.  As we will see, $\cH(\sHom(\cE_1,\cE_2))$ is always supported on the critical scheme of the zero fiber of $F$.

Let $(S,F)$ be an LG pair.  The \emph{Jacobi ideal (sheaf)} $J(F)$ of $F$ is defined to be the image of the map $\Theta_S \to \O_S$ given by contraction with $dF$, where $\Theta_S$ is the tangent sheaf.  The \emph{Tyurina ideal (sheaf)} is defined to be $\tau(F) := J(F) + F\cdot \O_S$.  If $S$ is nonsingular, the Tyurina ideal sheaf defines the scheme theoretical singular locus of the zero locus of $F$.  Let $Z$ be the subscheme associated to $\tau(F)$.  Observe that $\tau(F)$ is $\Gamma$-equivariant and hence $Z$ is invariant.

Suppose that $\cE$ is a graded D-brane on $(S,F)$.  We can trivialize $\cE$ over a small open set so that $d=d_\cE$ becomes a matrix.  If $v$ is a local vector field we can differentiate the entries of $d$ to obtain an endomorphism $v(d)$.  Now we have
\[ v(F)\cdot \id = v( d^2 ) = d v(d) + v(d) d. \]
Hence multiplication by $v(F)$ is nullhomotopic.  It follows that for any graded D-branes $\cE,\cF$, the cohomology sheaf $\cH(\sHom(\cE,\cF))$ is annihilated by $\tau(F)$ and thus $\cH(\sHom(\cE,\cF))$ is supported on $Z$.

\subsection{\v{C}ech model}

If $\{U_\alpha\}$ is a $\Gamma$-invariant open affine cover, then we put 
\[ \vC^\bt(S,\{U_\alpha\},\sHom(\cE_1,\cE_2)):= \vC^\bt(\Sigma_S,\{U_\alpha\},\sHom(\cE_1,\cE_2))^G_\bt.\]
Note that this is a bicomplex where the first grading is the grading on the \v{C}ech complex (upper bullet) and the second grading is the $\Gamma$ grading on the summands (lower bullet).  From now on we restrict attention to finite $\Gamma$-invariant affine open covers.

If $\cE_1,\cE_2,\cE_3$ are graded D-branes, then the natural composition map $\sHom(\cE_2,\cE_3)\tensor \sHom(\cE_1,\cE_2) \to \sHom(\cE_1,\cE_3)$ is compatible with the differentials.  Hence this map induces a chain map
\begin{equation}\label{dbr.composition}
 \vC^\bt(S,\{U_\alpha\},\sHom(\cE_2,\cE_3)) \tensor_\C \vC^\bt(S,\{U_\alpha\},\sHom(\cE_1,\cE_2)) \to \vC^\bt(S,\{U_\alpha\},\sHom(\cE_1,\cE_3)).
\end{equation}

\begin{dfn}[Category of graded D-branes]
Let $(S,F)$ be an LG pair and $\{U_\alpha\}$ a $\Gamma$-invariant affine open cover of $S$.  The DG category $\DBr(S,F, \{U_\alpha\})$ of graded D-branes is the DG category whose objects are graded D-branes and where
\[ \Hom^\bt_{\DBr(S,F)}(\cE_1,\cE_2) := \vC^\bt(S,\{U_\alpha\},\sHom(\cE_1,\cE_2))_\bt \]
is the total complex of the bicomplex.  Composition in this DG category is given by \eqref{dbr.composition}.
\end{dfn}

The quasi-equivalence class of $\DBr(S,F, \{U_\alpha\})$ does not depend on the specific choice of $\Gamma$-invariant affine open cover.  First, the total complex of the bicomplex $\vC^\bt(S,\{U_\alpha\},\sHom(\cE_1,\cE_2))_\bt$ has a finite filtration
\[ F^i \vC^\bt(S,\{U_\alpha\},\sHom(\cE_1,\cE_2)) := \bigoplus_{j \geq k} \vC^j(S,\{U_\alpha\},\sHom(\cE_1,\cE_2)) \]
which gives rise to a convergent spectral sequence
\begin{equation}\label{main.ss}
E_2^{i,j} = \sfH^i(S, \cH( \sHom(\cE_1,\cE_2) ))_j \Rightarrow \sfH^{i+j}(\vC^\bt(S,\{U_\alpha\},\sHom(\cE_1,\cE_2))).
\end{equation}
The other filtration, while not finite is locally finite since the \v{C}ech degree is bounded and therefore there is another convergent spectral sequence 
\begin{equation}\label{alt.ss}
E_1^{i,j} = \sfH^i(S, \sHom(\cE_1,\cE_2))_j \Rightarrow \sfH^{i+j}(\vC^\bt(S,\{U_\alpha\},\sHom(\cE_1,\cE_2))).
\end{equation}
Now if $\{V_\beta\}$ is another $\Gamma$-invariant open affine cover, then $\{U_\alpha \cap V_\beta\}$ is a common $\Gamma$-invariant open affine refinement.  Moreover, there are comparison maps
\[
\xymatrix{
\vC^\bt(S,\{U_\alpha\},\sHom(\cE_1,\cE_2)) & \ar[l] \vC^\bt(S,\{U_\alpha \cap V_\beta\},\sHom(\cE_1,\cE_2)) \ar[r] & \vC^\bt(S,\{V_\beta\},\sHom(\cE_1,\cE_2))
}
\]
which are compatible with the filtrations by \v{C}ech degree and thus \eqref{main.ss} can be used to show that they are quasi-isomorphisms.  In fact these comparison maps are compatible with \eqref{dbr.composition} as well.  So the comparison maps define quasi-equivalences
\[
 \xymatrix{
\DBr(S,F,\{U_\alpha\}) & \ar[l] \DBr(S,F,\{U_\alpha \cap V_\beta\}) \ar[r] & \DBr(S,F,\{V_\beta\}).
}
\]
From now on, we write $\DBr(S,F)$, suppressing the choice of cover, since the ambiguity in defining the category is rectified by canonical quasi-equivalences.  Denote by $[\DBr(S,F)]$ the homotopy category of $\DBr(S,F)$.  (It has the same objects but the hom space between $\cE_1$ and $\cE_2$ in $[\DBr(S,F)]$ is $\sfH^0 \Hom_{\DBr(S,F)}(\cE_1,\cE_2)$.)

\subsection{Triangulated structure and a second model}

The next goal is to show that $[\DBr(S,F)]$ is triangulated.  Following \cite{LP,Po,Or5} we will define a triangulated category $\DQcoh(S,F)$ which contains $[\DBr(S,F)]$ as a full triangulated subcategory.  We will see that in fact, it is dense in the subcategory of compact objects.  We use a slightly modified version of the notation in \cite{LP} and many ideas from \cite{Po2}.

\begin{dfn}
A \emph{curved graded quasicoherent sheaf} is a pair $(\cF,d)$ where $\cF$ is a $\Gamma$-equivariant quasicoherent sheaf on $S$ and $d$ is an endomorphism of $\cF$ of weight one such that $d^2 = F \cdot \id$.  We denote by $\grQcoh(S,F)$ the category whose objects are curved quasicoherent sheaves and where the complex of morphisms between $(\cF_1,d_1)$ and $(\cF_2,d_2)$ is
\[ \grHom^\bt(\cF_1,\cF_2), \]
the graded space of $\Gamma$-equivariant morphisms of all weights, equipped with the commutator differential.
\end{dfn}

There is a natural shift functor on $[\grQcoh(S,F)]$ and a collection of distinguished triangles that give it the structure of a triangulated category.  Let $\chi_n$ be the $n$-th power character of $\Gamma$.  We define a shift functor on $\grQcoh(S,F)$ by $(\cF,d)[1] := (\cF(\chi_1),-d)$.  There is a natural collection of distinguished triangles as well.  Let $\phi:\cF_1 \to \cF_2$ be a $\Gamma$-equivariant morphism that intertwines the differentials.  Then we define
\[ \cn(\phi) = \left( \cF_2 \oplus \cF_1[1], \begin{pmatrix} d_1 & 0 \\ \phi & -d_2 \end{pmatrix} \right). \]
A distinguished triangle in $[\grQcoh(S,F)]$ is a triangle isomorphic to one of the form
\[ \cF_1 \stackrel{\phi}{\to} \cF_2 \to \cn(\phi) \to \cF_1[1] \to \dotsm \]
The shift and triangles define a triangulated structure on $[\grQcoh(S,F)]$ like their analogs in the case of complexes.

Suppose that 
\[ \to \dotsm \stackrel{\delta_{-2}}{\to} \cF_{-1} \stackrel{\delta_{-1}}{\to} \cF_0 \stackrel{\delta_0}{\to} \dotsm \stackrel{\delta_{n-1}}{\to} \cF_n \stackrel{\delta_{n}}{\to} \dotsm \]
is a complex of curved graded quasicoherent sheaves, meaning that each $\delta$ is $\Gamma$ equivariant and intertwines the differentials.  The \emph{totalization} of this complex is the curved graded quasicoherent sheaf whose underlying sheaf is
\[ \bigoplus_{i \in \Z} {\cF_i(\chi_{-i})}, \]
and where the restriction of the differential to the direct factor $\cF_i(\chi_{-i})$ is $(-1)^i d_i \oplus \delta_i$, which has values in the direct factor $\cF_i(\chi_{-i}) \oplus \cF_{i+1}(\chi_{-(i+1)})$.

The category $\grQcoh(S,F)$ contains a natural subcategory of acyclic objects.  Let $[\grQcoh(S,F)]_{ac}$ be the thick triangulated subcategory generated by the totalizations of bounded acyclic complexes of curved graded quasicoherent sheaves and closed under arbitrary direct sums.

\begin{dfn}
The \emph{derived category of curved graded quasicoherent sheaves} is the Verdier quotient
\[ \DQcoh(S,F) = [\grQcoh(S,F)]/[\grQcoh(S,F)]_{ac}. \]
We also write $\grDcoh(S,F)$ for the full subcatogory of $\DQcoh(S,F)$ of objects isomorphic to coherent curved graded sheaves.
\end{dfn}

\begin{rmk} This category is often called the ``absolute derived category''.  However, since it will not face any competition from rival definitions in this article, we simply call it the ``derived category''.
\end{rmk}

Clearly we can view a graded D-brane as an object in $\DQcoh(S,F)$.  Let $\MF(S,F) \subset [\Qcoh(S,F)]$ and $\DMF(S,F) \subset \DQcoh(S,F)$ be the full subcategories whose objects are graded D-branes.  Observe that $\DMF(S,F)$ is the Verdier quotient $\MF(S,F)/\MF(S,F) \cap [\grQcoh(S,F)]_{ac}$.  Clearly, $\DMF(S,F)$ is triangulated.

\begin{rmk}
The triangulated category $\DMF(S,F)$ differs from the absolute derived category of graded matrix factorizations appearing in \cite{BW} in that $\MF(S,F) \cap [\grQcoh(S,F)]_{ac}$ is potentially larger than the category generated by acyclic objects built from finitely many graded D-branes.
\end{rmk}

Fix a $\Gamma$ invariant open cover $\{U_\alpha\}_{\alpha \in A}$ of $S$.  The \v{C}ech complex of a sheaf $\cF$ has a sheaf theoretic analogue.  Put
\[ \svC^i(\cF) = \bigoplus_{\substack{ A' \subset A \\ |A'| = i }}{ (j_{A'})_* j^*_{A'}\cF } \]
where $j_{A'}$ is the inclusion of the open set $\bigcap_{\alpha \in A'} U_\alpha$.  The differentials are defined by the familiar formula.  Note that the usual \v{C}ech complex is obtained by taking global sections of this complex.  If $\cF$ is $\Gamma$-equivariant, then $\svC^\bt(\cF)$ is a $\Gamma$-equivariant complex of sheaves.

This complex of sheaves is functorial in $\cF$, so if $\cF$ is a curved graded quasicoherent sheaf then for each $i$, $\svC^i(\cF)$ has a natural differential making it into a curved graded quasicoherent sheaves.  We write $\svC(\cF)$, without the bullet, for the totalization of $\svC^\bt(\cF)$.  There is a canonical morphism
\[ \grHom^\bt(\cF_1,\cF_2) \to \grHom^\bt(\svC(\cF_1),\svC(\cF_2)) \]
which is compatible with compositions, so we can think of $\svC$ as a DG endofunctor of $\grQcoh(S,F)$.  Note that the functor induced by $\svC$ on $[\grQcoh(S,F)]$ preserves the subcategory $[\grQcoh(S,F)]_{ac}$.

Let $\cF_1,\cF_2$ be curved graded quasicoherent sheaves.  Let us use the short hand
\[ \vC^\bt(\cF_1,\cF_2) := \vC^\bt(S,\{U_\alpha\},\sHom(\cF_1,\cF_2))_\bt. \]
Observe that the canonical morphism
\[  \vC^\bt(\cF_1,\cF_2) \to \grHom^\bt(\cF_1,\svC(\cF_2)) \]
is an isomorphism of complexes.  Suppose that $\cF_1$ is a graded D-brane and $\cF_2$ is the totalization of an acyclic complex of curved graded quasicoherent sheaves.  Then $\sHom(\cF_1,\cF_2)$ is the totalization of an acyclic complex of 0-curved graded quasicoherent sheaves.  It has an extra grading in addition to the $\Gamma$-equivariant structure.  Thus $\vC^\bt(\cF_1,\cF_2)$ is a bicomplex with the usual grading and the extra grading coming ultimately from the extra grading on $\cF_2$.  There is a convergent spectral sequence computing the cohomology of $\vC^\bt(\cF_1,\cF_2)$ whose zero-th page has only the differential induced by the differential on $\sHom(\cF_1,\cF_2)$ with respect to which it is acyclic.  Since taking the \v{C}ech complex is exact we see that the first page of this spectral sequence is zero and thus
\[ \sfH^0\grHom^\bt(\cF_1,\svC(\cF_2)) = 0. \]
Therefore, the functor $\sfH^0 \grHom^\bt(\cF_1,\svC(-))$ on $[\grQcoh(S,F)]$ factors through $\DQcoh(S,F)$.  The natural map $\cF \to \svC(\cF)$ is an isomorphism in $\DQcoh(S,F)$ since the \v{C}ech complex resolves $\cF$ and thus the cone on $\cF \to \svC(\cF)$ is the totalization of an acyclic complex of curved graded quasicoherent sheaves.  So there is an obvious natural transformation
\begin{equation}\label{cech-dqcoh} \sfH^0\grHom^\bt(\cF_1,\svC(-)) \to \Hom_{\DQcoh(S,F)}(\cF_1,-). \end{equation}
We shall briefly check that this is an isomorphism.  Recall that $\Hom_{\DQcoh(S,F)}(\cF_1,\cF_2)$ consists of equivalence classes of pairs of morphisms
$$ \cF_1 \stackrel{g}{\to} \cG \stackrel{f}{\from} \cF_2 $$
where the cone on $f$ belongs to $[\grQcoh(S,F)]_{ac}$.  Note that the map induced by $f$,
$$ \sfH^0\grHom^\bt(\cF_1,\svC(\cF_2)) \to \sfH^0\grHom^\bt(\cF_1,\svC(\cG)), $$
is an isomorphism because $\sfH^\bt\grHom(\cF_1,\svC(\cn(f)))=0$.  Thus, the map $g:\cF_1 \to \cG$ factors through $f$ and we see that \ref{cech-dqcoh} is surjective.  Next, suppose that a closed morphism $\phi:\cF_1 \to \svC(\cF_2)$ maps to zero in $\Hom_{\DQcoh(S,F)}(\cF_1,\cF_2)$.  By definition of the equivalence relation defining homs in $\DQcoh(S,F)$ there must be a diagram
$$ \xymatrix{
& \svC(\cF_2) \ar[d] & \\
\cF_1 \ar[ur]^\phi \ar[r] \ar[dr]_0 & \cG & \ar[ul] \ar[l]^{f} \ar[dl]^{\id} \cF_2 \\
& \cF_2 \ar[u] &
}
$$
where once again, the cone on $f$ is an acyclic curved quasicoherent sheaf.  The morphisms on the right side of the diagram all induce isomorphisms after applying $\sfH^0\grHom^\bt(\cF_1,\svC(-))$.  Of course, this means that $\phi$ and zero are identified and thus $\phi = 0$.  We conclude that the space of morphisms between $\cF_1$ and $\cF_2$ in $\DQcoh(S,F)$ is computed by $\vC^\bt(\cF_1,\cF_2)$.

There is a canonical DG functor
\[ \svC: \DBr(S,F) \to \grQcoh(S,F) \]
since the embedding
\[ \vC^\bt(\cE_1,\cE_2) \to \grHom^\bt( \svC(\cE_1), \svC(\cE_2)) \]
is compatible with composition.  It follows now that the induced functor
\[ \svC: [\DBr(S,F)] \to \DQcoh(S,F) \]
is fully faithful.  Thus we have the following.
\begin{prop}
$[\DBr(S,F)]$ is equivalent to $\DMF(S,F)$ and thus it is triangulated.
\end{prop}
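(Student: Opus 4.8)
The plan is to upgrade the fully faithful functor $\svC\colon [\DBr(S,F)] \to \DQcoh(S,F)$ constructed above to an equivalence onto the full subcategory $\DMF(S,F)$, and then to transport the triangulated structure from $\DMF(S,F)$ — which we already know to be triangulated, being the Verdier quotient $\MF(S,F)/(\MF(S,F)\cap[\grQcoh(S,F)]_{ac})$ — back along that equivalence. Two facts established earlier do all the real work: first, $\svC\colon [\DBr(S,F)]\to\DQcoh(S,F)$ is fully faithful (this is where the spectral sequences and the computation that \eqref{cech-dqcoh} is an isomorphism were used); second, for any curved graded quasicoherent sheaf the canonical map $\cF\to\svC(\cF)$ is an isomorphism in $\DQcoh(S,F)$, since its cone is the totalization of the (bounded, as our covers are finite) acyclic \v{C}ech complex and hence lies in $[\grQcoh(S,F)]_{ac}$.

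Now I would argue as follows. By definition $\DMF(S,F)$ is the full subcategory of $\DQcoh(S,F)$ whose objects are the graded D-branes, viewed in $\DQcoh(S,F)$ via $\grQcoh(S,F)\to\DQcoh(S,F)$; in particular it has exactly the same objects as $[\DBr(S,F)]$. Applying the second fact to a graded D-brane $\cE$ gives a canonical isomorphism $\cE\iso\svC(\cE)$ in $\DQcoh(S,F)$, natural in $\cE$ by functoriality of the \v{C}ech complex. Hence $\svC\colon [\DBr(S,F)]\to\DQcoh(S,F)$ is naturally isomorphic to the composite of a functor $\Psi\colon [\DBr(S,F)]\to\DMF(S,F)$ — the identity on objects, and on morphisms the resulting bijection $\sfH^0\vC^\bt(\cE_1,\cE_2)\iso\Hom_{\DQcoh(S,F)}(\cE_1,\cE_2)$ — with the inclusion $\DMF(S,F)\into\DQcoh(S,F)$. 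Since both $\svC$ and the inclusion are fully faithful, $\Psi$ is fully faithful; and $\Psi$ is essentially surjective because it is the identity on objects. Therefore $\Psi$ is an equivalence $[\DBr(S,F)]\he\DMF(S,F)$. Finally, pulling back the shift functor and the class of distinguished triangles of $\DMF(S,F)$ along $\Psi$ endows $[\DBr(S,F)]$ with a triangulated structure making $\Psi$ exact.

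I do not anticipate a genuine obstacle at this stage: the one substantive ingredient, full faithfulness of $\svC$, is already in hand, and what remains is bookkeeping around the essential image. The only point that needs slight care is that $\svC(\cE)$ is merely isomorphic to, not literally equal to, $\cE$ in $\DQcoh(S,F)$ (indeed $\svC(\cE)$ is not a graded D-brane at all), so the equivalence must be realized as the corestriction of $\svC$ along the canonical comparison isomorphisms $\cE\iso\svC(\cE)$ rather than by $\svC$ itself landing in $\DMF(S,F)$; the naturality of those isomorphisms is exactly what makes $\Psi$ a functor.
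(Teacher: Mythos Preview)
Your proposal is correct and follows essentially the same route as the paper: the paper's argument is the discussion immediately preceding the proposition, which establishes that $\svC\colon[\DBr(S,F)]\to\DQcoh(S,F)$ is fully faithful and that $\cE\to\svC(\cE)$ is an isomorphism in $\DQcoh(S,F)$, from which the equivalence with $\DMF(S,F)$ is read off. Your write-up is in fact more explicit than the paper about the bookkeeping step (that $\svC(\cE)$ is not itself a D-brane, so one must corestrict along the natural isomorphisms $\cE\iso\svC(\cE)$ to land in $\DMF(S,F)$), which is a genuine clarification.
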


We need a few ideas from the theory of triangulated categories.  Let $T$ be a triangulated category.  An object $E \in T$ is \emph{compact} if for any collection of objects $F_i \in T$ the natural map
\begin{equation}\label{sum-comparison} \bigoplus_i \Hom_T(E,F_i) \to \Hom_T(E,\bigoplus_i F_i) \end{equation}
is an isomorphism (provided $\bigoplus_i F_i$ exists).  The full subcategory of compact objects $T^c$ of $T$ is a triangulated subcategory, closed under taking direct summands.  If $T' \subset T$ is a triangulated subcategory, then $T'$ is \emph{dense} if every object of $T$ is a summand of an object in $T'$.  We say that $T$ has \emph{direct sums} if $T$ admits all small direct sums and direct sums of triangles are triangles.  We say that a set $\{E_i\}$ \emph{weakly generates} if whenever $\Hom_T(E_i,E')=0$ for all $i$ then $E'=0$.  $T$ is \emph{compactly generated} if $T$ has direct sums and there is a set of compact objects that weakly generates $T$.  According to \cite{N} if $T$ has direct sums and a set $\{E_i\}$ of compact objects weakly generates, then $T^c = \langle E_i \rangle$.  Here, $\langle E_i \rangle$ is the smallest full triangulated subcategory containing $\{E_i\}$ and closed under taking shifts, cones, isomorphic objects, and direct summands.

A triangulated category is Karoubi complete if whenever $\Psi \in \End(E)$ is an idempotent, there is an object $E'$ and morphisms $\psi_1:E \to E'$ and $\psi_2:E' \to E$ such that $\Psi = \psi_2 \circ \psi_1$ and $\psi_1 \circ \psi_2 = \id$.  If $T$ is an arbitrary triangulated category, we write $\ol{T}$ for the Karoubi completion.  This is the triangulated category obtained by formally adding objects to $T$ to make it Karoubi complete.

\begin{lma}\label{compact}
Graded D-branes are compact objects in $\DQcoh(S,F)$.
\end{lma}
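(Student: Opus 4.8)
The plan is to reduce the statement to the explicit \v{C}ech description of morphisms out of a graded D-brane obtained just above. Recall that for a graded D-brane $\cE$ and an arbitrary curved graded quasicoherent sheaf $\cG$ there is a natural isomorphism $\Hom_{\DQcoh(S,F)}(\cE,\cG)\cong\sfH^0\,\vC^\bt(\cE,\cG)$, where $\vC^\bt(\cE,\cG)=\vC^\bt(S,\{U_\alpha\},\sHom(\cE,\cG))_\bt$ is the total complex of the ($G$-invariant, $\Gamma$-graded) \v{C}ech bicomplex attached to a fixed \emph{finite} $\Gamma$-invariant affine open cover $\{U_\alpha\}$. Given a family $\{\cF_j\}$, I would first note that $\bigoplus_j\cF_j$ exists in $\DQcoh(S,F)$ and is represented by the honest direct sum of curved graded quasicoherent sheaves (underlying sheaf $\bigoplus_j\cF_j$, differential $\bigoplus_j d_j$): since $[\grQcoh(S,F)]_{ac}$ is by construction closed under arbitrary direct sums, the Verdier localization $[\grQcoh(S,F)]\to\DQcoh(S,F)$ preserves coproducts. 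It then suffices to show that the comparison map $\bigoplus_j\sfH^0\,\vC^\bt(\cE,\cF_j)\to\sfH^0\,\vC^\bt(\cE,\bigoplus_j\cF_j)$ is an isomorphism and that, under the identification above, it is the canonical map \eqref{sum-comparison}.

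The core point is that the functor $\cG\mapsto\vC^\bt(\cE,\cG)$ commutes with arbitrary direct sums of curved graded quasicoherent sheaves, and this is exactly where the two finiteness hypotheses enter. Because $\{U_\alpha\}$ is finite, $\vC^i(S,\{U_\alpha\},\sHom(\cE,\cG))=\bigoplus_{|A'|=i}\Gamma(U_{A'},\sHom(\cE,\cG))^G$ is a \emph{finite} direct sum (with $U_{A'}:=\bigcap_{\alpha\in A'}U_\alpha$), and since the \v{C}ech degree is bounded, each total degree of $\vC^\bt(\cE,\cG)$ is a finite direct sum of fixed-$\Gamma$-weight pieces of such terms. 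So it is enough to check that $\cG\mapsto\Gamma(U_{A'},\sHom(\cE,\cG))^G$ commutes with direct sums. Here $\cE$ is a vector bundle, so $\sHom(\cE,\cG)\cong\cE^\vee\tensor\cG$ and tensoring with the finite-rank locally free sheaf $\cE^\vee$ commutes with direct sums; $U_{A'}$ is affine because $\Sigma_S$ is separated and each $U_\alpha$ is affine, so $\Gamma(U_{A'},-)$ is exact and commutes with arbitrary direct sums of quasicoherent sheaves; and the invariants functor $(-)^G$ commutes with direct sums, being extraction of the weight-zero graded summand when $G$ is a torus and (in characteristic zero) a direct summand of the identity via averaging when $G$ is a finite abelian group. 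Composing these gives an isomorphism of complexes $\vC^\bt(\cE,\bigoplus_j\cF_j)\cong\bigoplus_j\vC^\bt(\cE,\cF_j)$, and since direct sums of complexes of $\C$-vector spaces are exact, $\sfH^0$ commutes with the sum as well.

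It remains to identify the resulting isomorphism with \eqref{sum-comparison}. For each index $j$ the structure morphism $\cF_j\to\bigoplus_\ell\cF_\ell$ induces, on $\sfH^0\,\vC^\bt(\cE,-)$, the inclusion of the $j$-th summand on the left-hand side; by naturality of the identification $\Hom_{\DQcoh(S,F)}(\cE,-)\cong\sfH^0\,\vC^\bt(\cE,-)$ in the target (this naturality is built into \eqref{cech-dqcoh} and into the isomorphism of complexes $\vC^\bt(\cE,-)\cong\grHom^\bt(\cE,\svC(-))$), the same morphism induces the $j$-th component of \eqref{sum-comparison} on $\Hom_{\DQcoh(S,F)}(\cE,-)$. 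Since both the comparison isomorphism of the previous paragraph and \eqref{sum-comparison} are determined by their components over the index set, they coincide; hence \eqref{sum-comparison} is an isomorphism and $\cE$ is compact.

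I expect the only real content to lie in the middle paragraph — the bundle of ``commutes with direct sums'' verifications, and the observation that finiteness of the cover together with the finite rank of $\cE$ is precisely what forces $\Hom_{\DQcoh(S,F)}(\cE,-)$ to be finitary. The matching of natural transformations in the last step, while it should be recorded, is a routine diagram chase with the naturality of \eqref{cech-dqcoh} already in hand.
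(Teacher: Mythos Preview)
Your argument is correct and follows the same approach as the paper: show that the \v{C}ech complex $\vC^\bt(\cE,-)$ commutes with direct sums of curved graded quasicoherent sheaves, then invoke the identification \eqref{cech-dqcoh} and its naturality to conclude. The paper compresses your middle paragraph into the single assertion that ``$\Gamma$-equivariant vector bundles satisfy the compactness property in the category of all $\Gamma$-equivariant quasicoherent sheaves,'' while you unpack this into the finiteness of the cover, the local freeness of $\cE$, affineness of the intersections, and exactness of $(-)^G$; but the skeleton is identical.
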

\begin{proof}
Let $\cE$ be a graded D-brane. $\Gamma$-equivariant vector bundles satisfy the compactness property in the category of all $\Gamma$-equivariant quasicoherent sheaves.  Thus for any collection $\{\cF_i\}$ of curved graded quasicoherent sheaves, the natural map
\[ \bigoplus_i \vC^\bt(\cE,\cF_i) \to \vC^\bt( \cE, \bigoplus_i \cF_i ) \]
is an isomorphism of complexes.  Since $\vC^\bt(\cE,\cF)$ computes $\Hom_{\DQcoh(S,F)}(\cE,\cF)$ and the isomorphism \eqref{cech-dqcoh} is compatible with the comparison map \eqref{sum-comparison}, the result follows.
\end{proof}

\begin{dfn}
We say that $S$ is \emph{$\Gamma$-quasiprojective} if it admits a $\Gamma$-equivariant ample line bundle.
\end{dfn}

\begin{lma}\label{generation}
Suppose $S$ is $\Gamma$-quasiprojective and $\Sigma_S$ is nonsingular.  Let $\cF \in \DQcoh(S,F)$.  If
\[ \Hom_{\DQcoh(S,F)}(\cE,\cF)=0 \]
for all graded D-branes $\cE$ then $\cF = 0$.
\end{lma}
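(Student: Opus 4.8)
The plan is to reduce the vanishing of $\cF$ to the classical fact that, on a $\Gamma$-quasiprojective scheme with $\Sigma_S$ nonsingular, $\Gamma$-equivariant vector bundles weakly generate $\DQcoh(\Sigma_S)$ (i.e.\ the ordinary, uncurved derived category of $\Gamma$-equivariant quasicoherent sheaves). First I would replace $\cF$ by an isomorphic representative $\svC(\cF)$ so that, using the identification \eqref{cech-dqcoh}, the hypothesis becomes $\sfH^0\grHom^\bt(\cE, \svC(\cF)) = 0$ for all graded D-branes $\cE$; applying the hypothesis to the shifts $\cE(\chi_n)$ shows in fact $\sfH^\bt\grHom^\bt(\cE,\svC(\cF)) = 0$ for all $\cE$ and in every internal degree. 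By $\Gamma$-quasiprojectivity, fix a $\Gamma$-equivariant ample line bundle $\O_S(1)$ on $S$; the bundles $\O_S(m)$ (with all $\Gamma$-twists) will be the test objects. Since these carry the zero endomorphism $d = 0$ only when $F = 0$, which we cannot assume, the first genuine point is that an arbitrary $\Gamma$-equivariant vector bundle $\cG$ can be made into a graded D-brane: form $\cE_\cG := (\cG \oplus \cG(\chi_1), \left(\begin{smallmatrix} 0 & F\cdot\id \\ \id & 0 \end{smallmatrix}\right))$, a ``trivial'' matrix factorization, which is a graded D-brane for $(S,F)$ and whose underlying sheaf surjects onto $\cG$.

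Next I would run the standard argument. Suppose $\cF \ne 0$ in $\DQcoh(S,F)$. Choose a curved representative and consider the complex of sheaves underlying it; using ampleness, there is some $\Gamma$-equivariant line bundle power $\O_S(-m)$ (twisted appropriately) and a nonzero map from it, at the level of ordinary sheaves, detecting a nonvanishing cohomology sheaf of $\cF$ in the appropriate sense. The technical content is to convert such an ordinary-sheaf statement into a nonzero morphism in $\DQcoh(S,F)$ out of a graded D-brane. For this I would use the second convergent spectral sequence \eqref{alt.ss}, which computes $\sfH^\bt\vC^\bt(\cE,\cF)$ from the sheaf cohomology groups $\sfH^i(S,\sHom(\cE,\cF))_j$ before taking curved cohomology; feeding in $\cE = \cE_\cG$ for $\cG$ a large negative twist of a vector bundle kills the higher $\sfH^i$ (Serre vanishing, equivariantly, via Sumihiro's theorem giving a $\Gamma$-invariant affine cover) and identifies the $E_1$-page with global sections, so that a nonzero map $\cG \to \cF$ of plain sheaves that survives $d$ produces a nonzero class in $\sfH^0\vC^\bt(\cE_\cG,\cF) = \Hom_{\DQcoh(S,F)}(\cE_\cG,\cF)$. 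Contrapositively, if all these Hom-groups vanish, then all the relevant cohomology sheaves of $\cF$ vanish in every internal degree, forcing $\cF$ to be the totalization of an acyclic complex, i.e.\ zero in $\DQcoh(S,F)$.

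The main obstacle, and the step I would spend the most care on, is the passage from ``$\cF$ is nonzero in the \emph{absolute} (curved) derived category'' to ``some cohomology sheaf of a curved representative is nonzero''. Unlike the ordinary derived category, an object of $\DQcoh(S,F)$ can fail to be zero even when its naive cohomology sheaves vanish, so one must argue at the level of the curved complex itself: pick a representative $\cF$, note it is not in $[\grQcoh(S,F)]_{ac}$, and use that the functors $\sfH^0\grHom^\bt(\cE_\cG, \svC(-))$ — which by the discussion preceding the Proposition are genuine functors on $\DQcoh(S,F)$ — jointly detect the zero object. Concretely, I expect the cleanest route is: the $\cE_\cG$ with $\cG$ ranging over $\{\O_S(-m)(\chi_n)\}_{m\gg 0, n\in\Z}$ form a set of compact objects (Lemma~\ref{compact}), and a standard truncation/dévissage argument — or direct examination of the two spectral sequences — shows that if a curved sheaf has no maps from any of these, then it is acyclic. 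The one subtlety beyond the classical case is bookkeeping the $\Gamma$-grading alongside the curved structure, which is handled uniformly by allowing all characters $\chi_n$ among the test objects.
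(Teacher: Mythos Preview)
Your proposal contains a fatal error at the very point where you try to produce nontrivial test objects. The ``trivial'' matrix factorizations $\cE_\cG = \bigl(\cG \oplus \cG(\chi_1),\ \left(\begin{smallmatrix}0 & F\\ \id & 0\end{smallmatrix}\right)\bigr)$ are contractible: the map $h = \left(\begin{smallmatrix}0 & \id\\ 0 & 0\end{smallmatrix}\right)$ is a weight~$-1$ endomorphism with $d\circ h + h\circ d = \id$. Hence $\cE_\cG \cong 0$ already in $[\grQcoh(S,F)]$, and a fortiori in $\DQcoh(S,F)$, so $\Hom_{\DQcoh(S,F)}(\cE_\cG,\cF)=0$ for \emph{every} $\cF$. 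Your proposed set of compact detectors therefore detects nothing, and the spectral-sequence argument built on them collapses to a tautology.

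The paper uses the trivial D-branes quite differently: not as test objects, but as the terms of a resolution. One writes an arbitrary \emph{coherent} curved sheaf as a quotient of a direct sum of trivial D-branes, iterates, and uses nonsingularity of $\Sigma_S$ to guarantee that after $\dim\Sigma_S$ steps the kernel is locally free---hence a genuine (typically nontrivial) graded D-brane. Totalizing the resolution shows $\grDcoh(S,F)=\DMF(S,F)$: every coherent curved sheaf is isomorphic to a graded D-brane. Consequently the hypothesis upgrades from ``$\cF$ is right-orthogonal to all graded D-branes'' to ``$\cF$ is right-orthogonal to all coherent curved sheaves''. The remaining step---that an object orthogonal to all coherents is zero in the \emph{absolute} derived category---is exactly the obstacle you correctly flagged, and it is not handled by ``standard d\'evissage'': there is no truncation in the curved setting, and naive cohomology sheaves do not characterize acyclicity. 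The paper does not argue this from scratch; it invokes Positselski's theorem (Section~3.11 of \cite{Po2}) on compact generation of the coderived category, whose proof uses injectives and condition~(*) for Noetherian categories. If you want a self-contained argument you will need to reproduce that machinery; the spectral sequences \eqref{main.ss}--\eqref{alt.ss} alone do not suffice.
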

\begin{proof}
First, we will show that every coherent curved graded sheaf $\cF$ is a quotient of a graded D-brane.  Suppose that $\cL$ is the $\Gamma$-equivariant ample line bundle and write $\cF(m) := \cF \tensor \cL^{\tensor m}$.  If $m$ is sufficiently large, $\cF(m)$ is globally generated.  Let $R$ be the ring of global functions on $S$ and $M$ the $R$ module of global sections of $\cF(m)$.  To express $\cF$ as a quotient of a graded D-brane, it suffices to express $M$ as a quotient of a graded D-brane over $(R,F)$.  There is a trivial way to do this.  We think of $M$ as a graded $R' := R[t]/(t^2 - F)$ module, where $t$ has degree 1.  Then since $M$ is finitely generated as an $R$ module, it is finitely generated as an $R'$ module.  Thus there is a surjection $(R')^{\oplus N} \onto M$ of graded $R'$ modules.  Thinking of the action of $t$ on $R'$ as an $R$ module endomorphism, we can identify $R'$ with the trivial graded D-brane
\[ \xymatrix{ R \ar@/^/[r]^{\id} & \ar@/^/[l]^F R }. \]
Since $R$ is the ring of global functions there is a $\Gamma$-equivariant morphism $S \to \spec(R)$.  Pulling back to $S$, we obtain a graded D-brane $\cE$ and a surjection $\cE \onto \cF(m)$ which we easily transform into the desired form $\cE(-m) \onto \cF$.

Let $\cF$ be a coherent curved graded sheaf.  For each $n$, there is a partial resolution
\[ \cE_n \to \cE_{n-1} \to \dotsm \cE_0 \onto \cF \]
where $\cE_i$ is a graded D-brane.  If $n > \dim \Sigma_S$ then $\ker( \cE_n \to \cE_{n-1})$ is automatically a vector bundle since $\Sigma_S$ is smooth.  (While the $\cE_i$ are trivial we don't know anything about this final kernel beyond the fact that it is a graded D-brane.)  Thus, we obtain a resolution for $\cF$ whose terms are graded D-branes.  The totalization of this complex (without $\cF$) is a graded D-brane which is isomorphic to $\cF$ in $\DQcoh(S,F)$.  Hence, $\DMF(S,F)=\Dcoh(S,F)$.

Let $\cF$ be arbitrary such that $\Hom(\cE,\cF) = 0$ for every graded D-brane $\cE$.  Then $\Hom(\cE,\cF)=0$ whenever $\cE$ is a coherent curved graded sheaf, since every coherent curved graded sheaf is isomorphic is $\DQcoh(S,F)$ to a graded D-brane. The rest of the argument can be adapted mutatis mutandis from the work of Positselski. He proves the analogous statement as Theorem 2 in Section 3.11 of \cite{Po2}.  The arguments given for Theorem 2 in Section 3.11 (including the supporting Theorem 3.6 and the Theorem of Section 3.7) use only properties of and constructions in categories of graded modules.  These arguments go through in this case because the category of $\Gamma \times G$-equivariant sheaves on $S$ has enough injectives and satisfies Positselski's condition (*) since $\Sigma_S$ is Noetherian.
\end{proof}

Clearly $\DQcoh(S,F)$ has arbitrary direct sums and the direct sum of triangles in $\DQcoh(S,F)$ is a triangle.  Now, Lemmas \ref{compact} and \ref{generation} show that $\DQcoh(S,F)$ is compactly generated.  It follows from \cite{BN} that the subcategory $\DQcoh(S,F)^c$ of compact objects is Karoubi complete (all idempotents split).  So we have the following.

\begin{prop}\label{Karoubian-completion}
If $S$ is $\Gamma$-quasiprojective and $\Sigma_S$ is nonsingular, then
$$\ol{[\DBr(S,F)]} \cong \DQcoh(S,F)^c.$$
\end{prop}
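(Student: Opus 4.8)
The plan is to deduce the statement from the standard theory of compactly generated triangulated categories, feeding in the three facts already established: Lemma~\ref{compact}, Lemma~\ref{generation}, and the identification of $[\DBr(S,F)]$ with the full triangulated subcategory $\DMF(S,F) \subset \DQcoh(S,F)$.

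First I would record that $\DQcoh(S,F)$ has all small direct sums and that direct sums of triangles are triangles: coproducts exist in $\grQcoh(S,F)$ and are formed on underlying sheaves, the subcategory $[\grQcoh(S,F)]_{ac}$ is by definition closed under arbitrary direct sums, and hence the Verdier quotient $\DQcoh(S,F)$ inherits coproducts compatibly with its triangulated structure. Next, Lemma~\ref{compact} says that every graded D-brane is a compact object. To apply Neeman's theorem I need a \emph{set} of compact weak generators; since $S$ is $\Gamma$-quasiprojective I can fix the $\Gamma$-equivariant ample line bundle $\cL$ and take, for instance, a set of isomorphism representatives of graded D-branes whose underlying bundle is a finite sum of twists $\cL^{\tensor m}(\chi_n)$ — by the construction in the proof of Lemma~\ref{generation} this already suffices, and in any case Lemma~\ref{generation} shows the collection of all graded D-branes weakly generates. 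Thus $\DQcoh(S,F)$ is compactly generated.

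By the theorem of Neeman cited as \cite{N}, it then follows that $\DQcoh(S,F)^c = \langle \{\text{graded D-branes}\} \rangle$, the smallest full subcategory of $\DQcoh(S,F)$ containing all graded D-branes and closed under shifts, cones, isomorphisms, and direct summands. Now $[\DBr(S,F)] \cong \DMF(S,F)$ is a full triangulated subcategory of $\DQcoh(S,F)$ that already contains every graded D-brane and is closed under shifts and cones; hence, among the closure operations defining $\langle - \rangle$, only passage to direct summands can enlarge it. Therefore $\DQcoh(S,F)^c$ is exactly the full subcategory of $\DQcoh(S,F)$ of objects that are direct summands of objects of $[\DBr(S,F)]$. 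Since $\DQcoh(S,F)^c$ is Karoubi complete by \cite{BN}, this summand-closure is an idempotent-complete triangulated category containing $[\DBr(S,F)]$ fully and cofinally, so by the universal property it is the Karoubi completion $\ol{[\DBr(S,F)]}$, which gives the asserted equivalence.

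The step I expect to be the main obstacle — really the only non-bookkeeping point — is the last identification: one must know that a single round of "adjoin direct summands" applied to $[\DBr(S,F)]$ inside $\DQcoh(S,F)$ already produces a Karoubi-complete category, so that the summand-closure genuinely is the abstract idempotent completion $\ol{[\DBr(S,F)]}$ rather than merely an intermediate step. This is precisely what \cite{BN} supplies (idempotents split in $\DQcoh(S,F)^c$); given that, no further splitting is required and the matching with $\ol{[\DBr(S,F)]}$ is automatic. Everything else is a routine application of Neeman's generation theorem.
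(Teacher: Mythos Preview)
Your proposal is correct and follows essentially the same route as the paper: the paper simply remarks that $\DQcoh(S,F)$ has direct sums, invokes Lemmas~\ref{compact} and~\ref{generation} to conclude it is compactly generated, then cites \cite{N} and \cite{BN} to identify $\DQcoh(S,F)^c$ with the thick closure of the graded D-branes and to see that it is Karoubi complete. Your write-up is a fleshed-out version of exactly this argument, including the observation that only the summand closure is needed beyond $[\DBr(S,F)]\cong\DMF(S,F)$.
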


\begin{rmk}
Observe that in the case where the $\Gamma$ action on $S$ is trivial, fixing all of $S$, we have $\DBr(S,0) = \Perf(S)$.  So if $S$ is nonsingular then $[\DBr(S,0)] \cong \Dcoh(S)$.  
\end{rmk}

\begin{rmk}\label{flat-branes}
It follows from \ref{generation} by the results of \cite{N} that $\DQcoh(S,F)$ is equivalent to the smallest triangulated subcategory of $\DQcoh(S,F)$ containing all graded D-branes and closed under taking direct sums.  Hence, every object of $\DQcoh(S,F)$ is isomorphic to an object whose underlying $\Gamma$-equivariant sheaf is flat.
\end{rmk}

\subsection{Pullback and external tensor product}

There is often a natural pullback functor associated to a morphism of LG pairs.  Suppose that $(S_1,F_1)$ and $(S_2,F_2)$ are two LG pairs and that $\phi:S_1 \to S_2$ is a $\Gamma$-equivariant morphism such that $F_1 = \phi^*F_2$.  Let $\{U_\alpha\}$ be a $\Gamma$-invariant affine open cover of $S_2$ and $\{V_\beta\}$ a $\Gamma$-invariant affine open cover refining $\{\phi^{-1}(U_\alpha)\}$.  Note that if $\cE$ is a graded D-brane on $(S_2,F_2)$ then $\phi^*\cE$ is a graded D-brane on $(S_1,F_1)$.  In addition there are chain maps
\[ \vC^\bt(S_2,\{U_\alpha\},\sHom(\cE_1,\cE_2)) \to \vC^\bt(S_1,\{V_\beta\},\sHom(\phi^*\cE_1,\phi^*\cE_2)) \]
which are compatible with compositions.  So there is a DG functor
\[ \phi^*: \DBr(S_2,F_2) \to \DBr(S_1,F_1) \]
that is compatible with the cone construction and thus induces an exact functor
\[ [\DBr(S_2,F_2)] \to [\DBr(S_1,F_1)]. \]

We can extend the notion of pullback to $\DQcoh(S_2,F_2)$ in the case that $S_2$ satisfies the hypotheses of \ref{generation}.  We simply define $\phi^*$ to be the naive pullback on the subcategory of $\DQcoh(S_2,F_2)$ consisting of objects whose underlying sheaves are flat.  By Remark \ref{flat-branes}, this subcategory is equivalent to $\DQcoh(S_2,F_2)$.

There is one last type of functorial construction that we can consider, the tensor product.  Let $S$ be an even graded scheme or stack and $F_1,F_2$ two semi-invariant regular functions of degree 2.  Then there is a natural DG functor $\DBr(S,F_1) \tensor_\C \DBr(S,F_2) \to \DBr(S,F_1+F_2)$.  If $\cE_1$ and $\cE_2$ are graded D-branes on $(S,F_1)$ and $(S,F_2)$ respectively and we define an endomorphism of $\cE_1 \tensor \cE_2$ by
\[ d = d_1 \tensor \id + \sigma \tensor d_2 \]
then $(\cE_1\tensor \cE_2, d)$ is a graded D-brane on $(S,F_1+F_2)$.  (Recall that $\sigma(m) = (-1)^{\deg(m)}m$ when $m$ is homogeneous.)  Now, if $\cE_1,\cF_1$ and $\cE_2,\cF_2$ are graded D-branes on $(S,F_1)$ and $(S,F_2)$ respectively, then there is a canonical isomorphism
\[ \sHom(\cE_1\tensor \cE_2,\cF_1 \tensor \cF_2) \cong \sHom(\cE_1,\cF_1) \tensor \sHom(\cE_2,\cF_2) \]
and therefore a chain map
\[
 \vC^\bt(S,\{U_\alpha\},\sHom(\cE_1,\cF_1)) \tensor_\C \vC^\bt(S,\{U_\alpha\},\sHom(\cE_2,\cF_2)) \to \vC^\bt(S,\{U_\alpha\},\sHom(\cE_1\tensor \cE_2,\cF_1 \tensor \cF_2)).
\]
This is compatible with the composition chain maps and therefore we obtain a DG functor
\[ \DBr(S,F_1)\tensor_\C \DBr(S,F_2) \to \DBr(S,F_1+F_2). \]

\section{Graded D-branes for a vector bundle and section}\label{s.vb}
In this section we study in detail the LG pair introduced in Example \ref{main.example}.  Let $Y$ be a nonsingular quasiprojective variety and $\cV$ a vector bundle over $Y$.  Suppose that $s$ is a regular section of $\cV$, meaning that the rank of $\cV$ is the same as the codimension of the zero locus of $s$.  Denote by $Z$ the zero locus of $s$.  Write $S$ for the total space of the vector bundle $\cV^\vee$ and let $p:S \to Y$ be the projection.  We can view $s$ as a regular function $F$ on $S$ that is linear on the fibers of $p$.  Since $S$ is the total space of a vector bundle, it has a natural $\Cx$ action.  However, we consider the LG pair $(S,F)$ where the $\Gamma$ action is obtained by ``doubling'' the $\Cx$ action via the squaring map $\lambda \mapsto \lambda^2$.

There is a distinguished object in $\DBr(S,F)$, which we denote by $\cK$.  To construct it, we begin by observing that $p^*\cV^\vee$ has both a canonical section and a canonical cosection.  The cosection, of course, is the pullback of $s$.  The section $s_Y$, on the other hand, is the canonical section of $p^*\cV^\vee$ which vanishes precisely on the zero section $Y \subset S$.  The composition
$$ \O_S \stackrel{s_Y}{\to} p^*\cV^\vee \stackrel{s}{\to} \O_S $$
is simply $F$. Now, since $p:S \to Y$ is $\Gamma$ invariant, $p^*\cV^\vee$ has a canonical $\Gamma$-equivariant structure.  With respect to this equivariant structure, $s^\vee$ is equivariant while $s_Y$ has weight two.  Hence $s$ and $s_Y$ have weight one as morphisms $\O_S \to p^*\cV^\vee(\chi_{-1})$ and $p^*\cV^\vee(\chi_{-1}) \to \O_S$, respectively.  (Recall from Section \ref{s.const} that $\cF(\chi)$ has equivariant structure twisted by the character $\chi$. In this case, shifting sections from weight zero into weight one.) We can now define $\cK$.

\begin{dfn}[Distinguished graded D-brane]
Let $\cK \in \DBr(S,F)$ be the graded D-brane whose underlying $\Gamma$-equivariant vector bundle is
$$ \bigwedge^\bt p^*\cV^\vee(\chi_{-1}) $$
and whose differential is given by the formula
$$ d(-) = s_Y \wedge (-) + s \vee (-). $$
\end{dfn}

This graded D-brane has a very important property.  View $\wedge^\bt p^*\cV^\vee(\chi_{-1})$ as a $\Gamma$-equivariant sheaf of algebras.  Then left multiplication gives a map
$$ \wedge^\bt p^*\cV^\vee(\chi_{-1}) \to \sEnd(\cK). $$
On the other hand, contraction gives a map
$$ \wedge^\bt p^* \cV(\chi_1) \to \sEnd(\cK). $$
Together, these two maps give an isomorphism
$$ \bigwedge^\bt \left( p^* \cV(\chi_1) \oplus p^*\cV^\vee(\chi_{-1}) \right) \cong \wedge^\bt p^* \cV(\chi_1) \tensor \wedge^\bt p^*\cV^\vee(\chi_{-1}) \to \sEnd(\cK). $$
It is straightforward to check that the isomorphism intertwines the differential on $\sEnd(\cK)$ with the Koszul differential
$$ d = (s_Y \oplus s) \vee -. $$
Hence there is a quasi-isomorphism
$$ \sEnd(\cK) \to \O_Z $$
since $Z$ is the vanishing locus of $s_Y \oplus s$, viewed as a section of $p^* \cV^\vee \oplus \cV$ and $s_Y \oplus s$ is a regular section.

Next, we define $\Sbr = p^{-1}Z \subset S$ and let $i:\Sbr \to S$ be the inclusion.  Since $F$ vanishes on $\Sbr$ and $\Sbr$ is $\Gamma$-invariant, we may view $i_*\O_{\Sbr}$ as an object in $\grQcoh(S,F)$.  Note that
$$ ( \wedge^\bt p^* \cV^\vee, s\vee- ) $$
is a resolution of $i_* \O_{\Sbr}$.  The natural map
$$ \cK \to i_*\O_{\Sbr} $$
actually sends the differential on $\cK$ to zero and therefore gives a morphism in $\grQcoh(S,F)$.

\begin{lma}\label{Koszul}
The morphism $\cK \to i_*\O_{\Sbr}$ is an isomorphism in $\DQcoh(S,F)$.  
\end{lma}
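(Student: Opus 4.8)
The plan is to recognize $\cK$ as the folding of the Koszul resolution of $i_*\O_{\Sbr}$ and then, using that graded D-branes generate $\DQcoh(S,F)$, to reduce the claim to a local, classical matrix-factorization computation. First, since $p\colon S\to Y$ is flat and $s$ is a regular section of $\cV$, the pullback of $s$ is a regular section of $p^*\cV$ cutting out $\Sbr=p^{-1}Z$, so $\mathbf K:=(\bigwedge^\bt p^*\cV^\vee,\,s\vee-)$ is a finite locally free resolution of $i_*\O_{\Sbr}$, as the text notes. The operator $h:=s_Y\wedge-$ raises exterior degree by one, and the identity $\iota_x(\xi\wedge-)+\xi\wedge\iota_x(-)=\langle x,\xi\rangle\cdot\id$ together with $\langle s,s_Y\rangle=F$ gives $d_{\mathbf K}h+hd_{\mathbf K}=F\cdot\id$, so $h$ is a contracting homotopy for multiplication by $F$ on $\mathbf K$; the object $\cK=(\bigwedge^\bt p^*\cV^\vee(\chi_{-1}),\,s_Y\wedge-+s\vee-)$ is exactly the graded D-brane obtained by folding $(\mathbf K,d_{\mathbf K},h)$, and $\cK\to i_*\O_{\Sbr}$ is the fold of the augmentation $\bigwedge^0 p^*\cV^\vee\onto i_*\O_{\Sbr}$.

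By Lemmas \ref{compact} and \ref{generation} the graded D-branes are a compact, weakly generating family in $\DQcoh(S,F)$, and they are stable under $\Gamma$-shift; hence $\cK\to i_*\O_{\Sbr}$ is an isomorphism provided it induces, for every graded D-brane $\cG$, a quasi-isomorphism of the \v{C}ech complexes $\vC^\bt(\cG,\cK)\to\vC^\bt(\cG,i_*\O_{\Sbr})$ that compute the Hom-spaces in $\DQcoh(S,F)$. Each of these complexes carries the spectral sequence \eqref{main.ss}, which converges because the \v{C}ech degree is bounded, with $E_2$-term $\sfH^\bt(S,\cH(\sHom(\cG,-)))$, and the morphism induces a map of spectral sequences; so it suffices to show that
\[ \cH(\sHom(\cG,\cK))\longrightarrow\cH(\sHom(\cG,i_*\O_{\Sbr})) \]
is an isomorphism of coherent sheaves for every graded D-brane $\cG$. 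This last assertion is local on $S$.

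In a $\Gamma$-invariant affine chart $\spec A$ with $A=A_0[p_1,\dots,p_r]$, $\deg p_i=2$, over which $\cV$ is trivialized, $s$ becomes a regular sequence $W_1,\dots,W_r\in A_0$, $F=\sum_i W_ip_i$, $\mathbf K$ becomes the Koszul complex of $(W_1,\dots,W_r)$, $h=\sum_i p_i\,e_i\wedge-$, $i_*\O_{\Sbr}$ becomes $A/(W_1,\dots,W_r)$ with zero differential (as $F\in(W_1,\dots,W_r)$), and $\cK$ becomes the Koszul matrix factorization $\bigotimes_i\{p_i,W_i\}$ of $F$. The desired sheaf isomorphism is then an instance of Eisenbud's matrix-factorization correspondence: because the $W_i$ form a regular sequence, $\bigotimes_i\{p_i,W_i\}$ is the stabilization of the $A/(F)$-module $A/(W_1,\dots,W_r)$, with the natural surjection realizing the stabilization isomorphism, so $\Hom_A(G,-)$ carries $\cK\to A/(W_1,\dots,W_r)$ to a quasi-isomorphism for every graded D-brane $G$. (Equivalently, on the formal neighbourhood of $Z\subset S$ the pair $(S,F)$ is the tautological quadratic form on $N_{Z/S}\cong\cV^\vee|_Z\oplus\cV|_Z$ and the map $\cK\to i_*\O_{\Sbr}$ is the standard Kn\"{o}rrer comparison between the Koszul brane and the structure sheaf of a maximal isotropic.)

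The heart of the proof, and the step I expect to be most delicate, is this local computation. It is the familiar identification of a folded free resolution with the module it resolves, but it must be run $\Gamma$-equivariantly with the doubled grading tracked throughout; and, crucially, two-periodicity forbids simply filtering $\sHom(\cG,\cK)$ by exterior degree (the summands $s_Y\wedge-$ and $s\vee-$ of $d_{\cK}$ move that degree in opposite directions), so the comparison must instead be organized around the fact --- already recorded in Section \ref{s.const} --- that all the relevant cohomology is supported on the Tyurina subscheme, where the explicit homotopy $h$ does the trivializing. A secondary point to verify is that the reduction of the second paragraph is legitimate: that the finiteness of the \v{C}ech filtration together with the compact generation of Lemmas \ref{compact} and \ref{generation} genuinely reduce the isomorphism question in $\DQcoh(S,F)$ to the isomorphism of the sheaves $\cH(\sHom(\cG,-))$.
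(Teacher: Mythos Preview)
Your strategy is sound in outline but takes a different and heavier route than the paper. The paper never invokes compact generation or reduces to charts; it shows directly that the cone lies in $[\grQcoh(S,F)]_{ac}$. It isolates a general principle: if a curved sheaf $\mathcal C^\bt$ carries a bounded auxiliary $\mathbb Z$-grading (here, exterior degree together with $i_*\O_{\Sbr}$ in top degree) with the differential split as $\alpha+\beta$, where $\alpha$ raises and $\beta$ lowers that grading, $\alpha^2=\beta^2=0$, $\alpha\beta+\beta\alpha=F\cdot\id$, and $(\mathcal C^\bt,\alpha)$ is acyclic, then $(\mathcal C^\bt,\alpha+\beta)=0$ in $\DQcoh$. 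The proof is a short induction on the number of nonzero graded pieces: the bottom slice $\mathcal C^n\oplus\alpha(\mathcal C^n)$ is closed under \emph{both} $\alpha$ and $\beta$ (this is the key observation---$\beta$ kills $\mathcal C^n$ since $n$ is minimal, and $\beta\alpha|_{\mathcal C^n}=F\cdot\id$ lands back in $\mathcal C^n$), hence is a curved subsheaf, contractible because $\alpha$ is an isomorphism there; the quotient has one fewer piece. This sidesteps exactly the filtration obstruction you correctly flag.

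Your approach buys a pleasant conceptual picture (fold of a resolution, stabilization) at two costs. First, you import the $\Gamma$-quasiprojectivity hypothesis of Lemma~\ref{generation}, which the paper's argument does not use and which is only recorded in the text \emph{after} Lemma~\ref{Koszul}. Second, and more seriously, your local step is essentially the statement under discussion: showing that $\Hom_A(G,\cK)\to\Hom_A(G,A/(W_1,\dots,W_r))$ is a quasi-isomorphism for every matrix factorization $G$ is, by compact generation on the affine, equivalent to showing the cone vanishes in the local $\DQcoh$. Eisenbud's Section~7 treats a single non-zero-divisor over a regular local ring and the stable MCM equivalence; the comparison of a folded bounded free resolution with the module it resolves, in $\DQcoh$, is not literally there in the generality you need. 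The paper's inductive argument is precisely the clean proof of that folding fact---its subsequent remark credits Eisenbud for the idea but supplies its own argument---so your appeal to \cite{E} is close to circular.
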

\begin{proof}
It suffices to check that the cone on this morphism is zero.  We will prove a more general statement that is more convenient to work with.  Suppose that $\cC^\bt$ is a bounded $\Z$ graded $\Gamma$ equivariant sheaf with endomorphisms $\alpha$ and $\beta$ that have $\Gamma$-weight one and raise and lower the bullet grading, respectively.  Assume that $\alpha^2 = \beta^2 = 0$ and $\alpha \circ \beta + \beta \circ \alpha = F \cdot \id$ so that $(\cC^\bt,\alpha+\beta)$ is a curved, graded quasicoherent sheaf.  If the complex $(\cC^\bt,\alpha)$ is acyclic then $(\cC^\bt,\alpha+\beta)$ is zero in $\DQcoh(S,F)$.

We proceed by induction on the number of bullet-degrees in which $\cC^\bt$ is nonzero.  In order for $(\cC^\bt,\alpha)$ to be acyclic, $\cC^\bt$ is either zero or has two or more nonzero homogeneous components.  If there are two homogeneous components then $\alpha$ is an isomorphism and therefore $\beta$ is $F \cdot \alpha^{-1}$.  In this case the identity morphism of $(\cC^\bt,\alpha+\beta)$ is nullhomotopic so it is isomorphic to the zero object.

In general, suppose that $n$ is the smallest integer for which $\cC^n \neq 0$.  Observe that $\alpha:\cC^n \to \cC^{n+1}$ must be injective.  So $\alpha$ gives an isomorphism between $\cC^n$ and $\alpha(\cC^n) \subset \cC^{n+1}$.  Now $\cC^n \oplus \alpha(\cC^n) \subset \cC^\bt$ is clearly closed under both $\alpha$ and $\beta$ separately.  Hence $\alpha$ and $\beta$ descend to $\ol{\cC}^\bt := \cC^\bt/(\cC^n \oplus \alpha(\cC^n))$ and satisfy the same relations.  Moreover, by construction $(\ol{\cC}^\bt,\alpha)$ is still acyclic. By induction, it is trivial.  Clearly $(\cC^n\oplus \alpha(\cC^n),\alpha+\beta)$ is trivial and since $(\cC^\bt,\alpha+\beta)$ fits into a triangle with two trivial objects, it is trivial as well.

Finally, we observe that the cone on $\cK \to i_*\O_{\Sbr}$ has the form considered in the preceding paragraphs, with $\alpha$ corresponding to $s^\vee \vee -$ and $\beta$ corresponding to $s_Y \wedge$ and where $\wedge^i p^*\cV^\vee(\chi_{-1})$ in bullet-degree $-i$.
\end{proof}

\begin{rmk}
The Lemma above is a special case of the stabilization discussed by Eisenbud in Section 7 of \cite{E}.
\end{rmk}

Suppose that $Y$ has an ample line bundle $\O_Y(1)$.  Then $S$ is $\Gamma$-quasiprojective with ample bundle $\O_S(1) = p^*\O_Y(1)$.  We put $\cK(i) = \cK \tensor \O_S(i)$, where the tensor product is as $\Gamma$-equivariant vector bundles.

We will now prove that $\DMF(S,F)$ is equivalent to $\Dcoh(Z)$.  First, we note that $i:\Sbr \to S$ is affine and $p:\Sbr \to Z$ is flat.  Therefore, $i_*$ and $p^*$ are exact.  Now $i$ and $p$ are $\Gamma$-equivariant (where $Z$ has the trivial $\Gamma$-equivariant structure) and $F$ vanishes on $\Sbr$.  Therefore there are natural triangulated DG functors
$$ p^*:\grQcoh(Z,0) \to \grQcoh(\Sbr,0), \quad i_*:\grQcoh(\Sbr,0) \to \grQcoh(S,F) $$
and since $i_*$ and $p^*$ are exact on the abelian categories of quasicoherent sheaves and preserve direct sums, they descend to triangulated functors
$$ p^*:\DQcoh(Z,0) \to \DQcoh(\Sbr,0), \quad i_*:\DQcoh(\Sbr,0) \to \DQcoh(S,F). $$
Clearly $p^*$ sends $\grDcoh(Z,0) = \Dcoh(Z)$ to $\grDcoh(\Sbr,0)$ and since $S$ is nonsingular and $\Gamma$-quasiprojective $i_*$ sends $\grDcoh(\Sbr,0) \to \DMF(S,F)$, as in \ref{generation}.

\begin{thm} \label{main-equivalence}
The functor $i_* \circ p^*:\Dcoh(Z) \to \DMF(S,F)$ is an equivalence.
\end{thm}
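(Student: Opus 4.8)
The plan is to show $i_* \circ p^*$ is fully faithful and essentially surjective (equivalently, that it is fully faithful with dense image, since $\DMF(S,F)$ is generated under the relevant operations by its objects). The key structural input is Lemma~\ref{Koszul}, which identifies the distinguished D-brane $\cK$ with $i_*\O_{\Sbr}$ in $\DQcoh(S,F)$, together with the computation $\sEnd(\cK) \simeq \O_Z$ established just before the excised theorem. I would organize the argument around the object $\cK$ and its twists $\cK(i)$.

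First I would verify \emph{essential surjectivity onto a generating set}. By Lemma~\ref{generation} and the discussion following it, $\DMF(S,F) = \grDcoh(S,F)$ and the category $\DQcoh(S,F)$ is compactly generated by graded D-branes; moreover, using the ample bundle $\O_S(1) = p^*\O_Y(1)$, one expects the twists $\cK(i)$ to weakly generate $\DQcoh(S,F)$ — this is essentially the content of the commented-out Theorem~\ref{weak-generation}, and I would either reinstate that argument or derive it from the annihilation of $\cH(\sHom(\cK(i),\cE))$ by the Tyurina ideal combined with ampleness. Since $\cK(i) = i_*\O_{\Sbr}\tensor\O_S(i) = i_*(p^*\O_Z(i))$ lies in the image of $i_*\circ p^*$, this shows the image contains a (weakly) generating family, hence — using that both sides have direct sums and that $i_*, p^*$ commute with them — the image is all of $\DMF(S,F)$ up to the issues of idempotent-completeness, which are harmless here since we are comparing with $\Dcoh(Z)$ directly.

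Next, \emph{full faithfulness}. The heart is to compute $\Hom_{\DMF(S,F)}(i_*p^*\cA, i_*p^*\cB)$ for $\cA,\cB \in \Dcoh(Z)$ and match it with $\Hom_{\Dcoh(Z)}(\cA,\cB)$. Using the \v{C}ech model, $\Hom_{\DQcoh(S,F)}(i_*p^*\cA, i_*p^*\cB)$ is computed by $\vC^\bt(S,\{U_\alpha\},\sHom(i_*p^*\cA,i_*p^*\cB))$; since $i$ is affine and $p$ is flat, $i_*$ is exact and one can push the $\sHom$ down. The case $\cA = \O_Z$ reduces to the statement $\sfH^\bt\Hom_{\DQcoh}(i_*\O_{\Sbr}, i_*p^*\cB) \cong \sfH^\bt\Hom_{\Dcoh(Z)}(\O_Z,\cB)$, which should follow from $\cK \simeq i_*\O_{\Sbr}$, the identification $\sEnd(\cK)\simeq\O_Z$, a projection-formula computation $i^!i_*(p^*\cB) \simeq p^*\cB \tensor (\text{something resolving }\O_Z)$, and the flatness of $p$; I would then bootstrap from $\O_Z$ (and its twists $\O_Z(i)$, which generate $\Dcoh(Z)$) to arbitrary $\cA$ by taking resolutions by sums of $\O_Z(i)$'s and using exactness of $i_*\circ p^*$ together with the five lemma on the resulting triangles.

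\textbf{The main obstacle} I anticipate is the full faithfulness computation — specifically, controlling $i^! i_* p^* \cB$, i.e.\ showing that the curved complex $\sHom(\cK,\cK)\tensor(\text{resolution of }\cB)$ really does compute $\bR\Hom_{\Dcoh(Z)}(\O_Z,\cB)$ and not something with extra terms. This is exactly the kind of local Koszul-resolution bookkeeping carried out in the commented-out proof of Theorem~\ref{weak-generation} (the argument that tensoring with a D-brane $N$ whose cohomology is annihilated by the fiberwise coordinates $p_i$ preserves quasi-isomorphisms, via the Koszul complex $K(p_1,\dots,p_r)$ collapsing the spectral sequence). So concretely I would: (1) resolve $i_*\O_{\Sbr}$ by $\cK$; (2) resolve $\cB$ on $Z$ by a bounded complex of vector bundles and pull back/push forward; (3) reduce $\sHom(\cK, i_*p^*\cB)$ locally to $\wedge^\bt\C^r \tensor (\text{pullback of the resolution of }\cB)$ with the Koszul differential, using regularity of $s_Y\oplus s$; (4) observe the $\wedge^\bt\C^r$-part contributes only the identity in cohomology because the critical locus sits over $Z$, leaving exactly $\bR\Hom_Z(\O_Z,\cB)$; and (5) assemble globally via the \v{C}ech model and the spectral sequence \eqref{main.ss}. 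The twists $\cK(i)$ and the generation statement then upgrade this from $\O_Z$ to all of $\Dcoh(Z)$, completing both directions.
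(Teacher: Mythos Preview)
Your overall strategy matches the paper's: prove full faithfulness on the twists $\O_Z(i)$ using the identification $\cK \cong i_*\O_{\Sbr}$ from Lemma~\ref{Koszul}, then show the image weakly generates. The implementations differ, however. For full faithfulness the paper avoids your local spectral-sequence bookkeeping entirely: it simply observes that $\sHom(\cK(j),i_*p^*\O_Z(j')) \cong \cK^\vee|_{\Sbr}(j'-j)$ \emph{is} the Koszul resolution of $\O_Z(j'-j)$ on $\Sbr$, so the \v{C}ech comparison with $\vC(\sHom(\O_Z(j),\O_Z(j')))$ is immediate via a commutative triangle. This is considerably cleaner than your proposed step-(3)/(4) local reduction to $\wedge^\bt\C^r$. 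For generation, the paper does \emph{not} reinstate the commented-out Theorem~\ref{weak-generation}; instead it runs a different orthogonality argument: assuming $i_*p^*\perp\cF$, it shows this persists after tensoring $\cF$ by any equivariant vector bundle, then uses the Koszul complex $K(s)$ resolving $\O_{\Sbr}$ on $S$ to deduce $\cF|_{\Sbr}=0$, and finally pushes $\sHom(\cE,\cF)$ down to $Y$ to conclude $\cF=0$. Your route via the commented-out argument should also work, but the paper's replacement is more self-contained.

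One genuine gap to watch: you write that the $\O_Z(i)$ ``generate $\Dcoh(Z)$'' and propose to resolve $\cB$ by bounded complexes of vector bundles on $Z$. But $Z$ need not be smooth, so neither statement holds as written---the $\O_Z(i)$ only generate $\Perf(Z)$. The paper handles this by first establishing full faithfulness on $\Perf(Z)=\langle\O_Z(i)\rangle$, then invoking $\Dcoh(Z)\subset\langle\O_Z(i)\rangle^\oplus$ together with the fact that $i_*p^*$ preserves arbitrary direct sums to extend to all of $\Dcoh(Z)$. Your bootstrap needs this extra step.
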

\begin{proof}
We will first prove that the functor is fully faithful.  Fix an open affine cover $\{U_{Y,\alpha}\}$.  This gives rise to open affine covers $\{ U_\alpha \cap Z \}, \{ p^{-1}U_\alpha \},$ and $\{p^{-1}U_\alpha \cap \Sbr \}$ of $Z,S,$ and $\Sbr$ respectively.  In what follows we use this system of compatible covers to form all \v{C}ech complexes and sheaves, in particular those used implicitly to define categories of graded D-branes.

We need to calculate what the functor does to spaces of morphisms.  First, recall that we can lift $i_*p^*$ to $\grQcoh(S,F)$ by sending $\cP \in \DBr(Z,0)$ to $\svC(i_*p^*\cP)$.  Then there is a natural morphism of complexes
$$ \vC( \sHom(\cP,\cQ)) \to \Hom( \svC(i_*p^*\cP),\svC(i_*p^*\cQ)). $$
Next, we know that the morphism $\cK(j) \to i_*p^*\O_Z(j)$ is an isomorphism in $\DQcoh(S,F)$.   So we consider the arrow
$$
\vC( \sHom(\O_Z(j),\O_Z(j')) \to \Hom( \svC(i_*p^*\O_Z(j)),\svC(i_*p^*\O_Z(j'))) \to \Hom( \svC(\cK(j)),\svC(i_*p^*\O_Z(j')))
$$
and note that the right hand complex computes $\Hom(i_*p^*\O_Z(j),i_*p^*\O_Z(j'))$ in $\DQcoh(S,F)$.  To see that this arrow is a quasi-isomorphism we note that it fits into the commutative triangle
$$
\xymatrix{
\vC( \sHom(\O_Z(j),\O_Z(j'))) \ar[r] & \Hom( \svC(\cK(j)), \svC(i_*p^*\O_Z(j'))) \\
\vC( \sHom( \cK(j),i_*p^*\O_Z(j'))) \ar[ur] \ar[u] &
}
$$
and observe that $\sHom(\cK(j),i_*p^*\O_Z(j')) \cong \cK^\vee|_{\Sbr}(j'-j)$ is none other than (the direct image in $S$ of) the Koszul resolution of $\O_Z(j-j')$ on $\Sbr$.  Since $[\Perf(Z)] = \langle \O_Z(i) \rangle_{i \in \Z}$ (see \cite{BvdB}), this means that $i_*p^*$ is fully faithful on $[\Perf(Z)]$.  (Recall that $\langle \O_Z(i) \rangle_{i \in \Z}$ is the smallest full triangulated subcategory of $\bD(\Qcoh(Z))$ containing $\{\O_Z(i)\}$ and closed under cones, shifts, isomorphic objects, and direct summands.)  We denote by $\langle \O_Z(i) \rangle_{i \in \Z}^\oplus$ the closure of $\langle \O_Z(i) \rangle_{i \in \Z}$ under taking small direct sums and note that by results in \cite{N}, $\Dcoh(Z) \subset \langle \O_Z(i) \rangle_{i \in \Z}^\oplus$.  Since $i_*p^*$ preserves direct sums it follows that $i_* p^*$ is fully faithful on $\Dcoh(Z)$ as well.

Now $\Dcoh(Z)$ is idempotent complete.  Therefore, if the image of $i_*p^*$ weakly generates $\DQcoh(S,F)$ then $i_*p^*$ gives an equivalence $\Dcoh(Z) \to \DQcoh(S,F)^c$ that factors though the inclusion $\DMF(S,F)$.  Since $\DMF(S,F) \subset \DQcoh(S,F)^c$ is dense this means that $i_*p^*$ gives an equivalence between $\Dcoh(Z)$ and $\DMF(S,F)$.

It remains to show that the image of $i_*p^*$ weakly generates $\DQcoh(S,F)$.  Suppose that $\cF \in \DQcoh(S,F)$ is an object such that $\Hom(i_*p^*\cG,\cF)=0$ for all $\cG \in \Dcoh(Z)$.  Let us say that such an object is in the right orthogonal to $i_*p^*$ and write $i_*p^* \perp \cF$.  I claim that if $\cW$ is a $\Gamma$-equivariant vector bundle on $S$ then $i_*p^* \perp \cF \tensor \cW$.  First, note that $\cW|_{\Sbr} \cong p^* \cW|_Z$.  Next for $\cG \in \Dcoh(Z)$, suppose that $\cE \to i_*p^*\cG$ is an isomorphism where $\cE$ is a graded D-brane.  Then
$$\cE \tensor \cW^\vee \to i_*p^*\cG \tensor \cW^\vee = i_*p^*(\cG \tensor \cW|_Z)$$
is also an isomorphism.  Now
$$ \Hom(i_*p^*\cG,\cF \tensor \cW) \cong \Hom( \cE, \cF \tensor \cW) \cong \Hom( \cE \tensor \cW^\vee, \cF) \cong \Hom(i_*p^*(\cG\tensor \cW^\vee|_Z), \cF) = 0 $$
Since $\DMF(S,F)$ weakly generates $\DQcoh(S,F)$ we can assume that the sheaf underlying $\cF$ is a direct sum of equivariant vector bundles and hence flat.  Let $K(s)$ be the Koszul resolution of $\Sbr \subset S$.  It is $\Gamma$-equivariant and $K(s)\tensor \cF \to \cF|_{\Sbr}$ is an isomorphism.  Since $K(s)\tensor \cF$ is an iterated cone on objects of the form $\cF \tensor \cW$ where $\cW=\wedge^m \cV^\vee$ for values of $m$ we see that $i_*p^* \perp \cF|_{\Sbr}$.  Clearly, this implies that $\cF|_{\Sbr} =0$.

Thus for any graded D-brane $\cE$, $\Hom(\cE,K(s) \tensor \cF) = 0$.  We can compute $\Hom(\cE,K(s)\tensor\cF)$ by first pushing $K(s) \tensor \sHom(\cE,\cF)$ down to $Y$, then taking hypercohomology.  Now $p_*K(s) \tensor \sHom(\cE,\cF) = K_Y(s) \tensor p_*\sHom(\cE,\cF)$ and since $p_*\sEnd(\cE)$ is supported on $Z$, so is $p_*\sHom(\cE,\cF)$.  (We write $K_Y(s)$ for the Koszul complex of $s$ on $Y$.)  Observe that this means that $\cH^\bt(K_Y(s) \tensor p_*\sHom(\cE,\cF))$ is locally isomorphic to $\wedge^\bt\cV^\vee \tensor \cH^\bt(p_*\sHom(\cE,\cF))$.  We conclude that $p_*\sHom(\cE,\cF)$ is acyclic if and only if $K_Y(s)\tensor p_*\sHom(\cE,\cF)$ is acyclic.  If the hypercohomology of $K_Y(s)\tensor p_*\sHom(\cE,\cF)(i)$ vanishes for all $i$ then $K_Y(s)\tensor p_*\sHom(\cE,\cF)$ and thus, $p_*\sHom(\cE,\cF)$, has to be acyclic.  However if $p_*\sHom(\cE,\cF)$ is acyclic then $\Hom(\cE,\cF)=0$.  Because $\cE$ was arbitrary, this implies, at last, that $\cF = 0$.
\end{proof}

\begin{rmk}
Orlov uses a similar geometric picture to prove Theorem 2.1 in \cite{Or2}, which relates the category of singularities of the zero locus of a regular section of a vector bundle to the category of singularities of a divisor on the associated projective space bundle.  It is likely that one could adapt his technique to this situation.  However, it has already appeared several times in the literature and therefore we have presented a different approach.
\end{rmk}

\begin{rmk} \label{kp-remark}
Suppose that $Z \subset Y$ is actually a retract, meaning that there is a map $q:Y \to Z$ such that $q \circ i_Z = \id$.  Then the equivalence between $\Perf(Z)$ and $\DMF(S,F)$ can be presented in a different way.  Indeed, since there is a projection $q:Y \to Z$, there is a projection $\wt{q}:S \to Z$ obtained by composing $p$ and $q$.  Clearly $\wt{q}$ is $\Gamma$ invariant.  Consider the DG functor
$$ \Phi:\DBr(Z,0) \to \DBr(S,F) $$
defined by
$$ \Phi(\cP) = \wt{q}^*\cP \tensor \cK. $$
There is a natural isomorphism of functors $\Phi \cong i_* p^*$ since
$$ \wt{q}^*\cP \tensor \cK \to \wt{q}^*\cP \tensor i_*p^*\O_Z \to i_* p^*\cP $$
is an isomorphism in $\DQcoh(S,F)$.  Therefore $\Phi$ is an equivalence.

This situation occurs in one degenerate case.  Let $Z$ be a variety and let $\ol{\cV}$ be a vector bundle on $Z$.  Let $Y$ be the total space of $\ol{\cV}$ and set $q:Y \to Z$ be the vector bundle projection.  Now, put $\cV = q^*\cV$.  As we know, $\cV$ has a tautological section $s$ that vanishes to first order along $Z \subset Y$.  The structure of the LG pair $(S,F)$ corresponding to this $(\cV,s)$ is very special.  First, $S$ can be understood as the total space of $\ol{\cV} \oplus \ol{\cV}^\vee$.  Working over a small enough open affine set $U \cong \spec(R) \subset Z$, we can choose dual trivializations of $\ol{\cV} \oplus \ol{\cV}^\vee$ so that over $U$, $S$ looks like $\spec(R[x_1,\dotsc,x_r,y_1,\dotsc,y_r])$ and $F = \sum_i{ x_iy_i }$, where $r$ is the rank of $\ol{\cV}$.  If $x_i$ and $y_i$ correspond to $\ol{\cV}$ and $\ol{\cV}^\vee$ respectively then the $x_i$ and $y_i$ have $\Gamma$ weight 0 and 2 respectively while $R$ has $\Gamma$-weight 0.

This type of function $F$ was considered by Kn\"{o}rrer in \cite{Kn} and we can view Theorem \ref{main-equivalence} as a generalization of Kn\"{o}rrer's Theorem 3.1.
\end{rmk}

\begin{rmk}
It follows from Lemma \ref{Koszul} that $i_*p^*$ gives an equivalence between $\Perf(Z) = \langle \O_Z(i) \rangle_{i \in \Z}$ and $\langle \cK(i) \rangle_{i \in \Z}$.  Thus, if $Z$ is nonsingular, $\DMF(S,F) = \langle \cK(i) \rangle_{i \in Z}$.
\end{rmk}


\section{Deformation to the normal bundle}\label{s.dnb}
\def\nzs{N_{Z/S}}

In this section we discuss a framework for handling a certain type of deformation of LG pairs.  As a special case, we show that the general LG pair from the previous section can be deformed to the degenerate LG pair of Remark \ref{kp-remark}.

Suppose that $(S,F)$ is an LG pair.  Let $Z$ be the reduced subscheme defined by $\tau(F)$.  \emph{We assume that $S$ is nonsingular and quasiprojective.}  Suppose that $Z$ is $\Gamma$ invariant and that $\cI_Z$ ($=\sqrt{\tau(F)}$) is the ideal sheaf defining $Z$.  Let $\nzs$ be the normal cone, the spectrum of the sheaf of algebras $\bigoplus_n \cI_Z^n/\cI_Z^{n+1}$.  Since $\cI_Z$ is $\Cx$ equivariant, there is a natural $\Gamma$ action on $\nzs$ inherited from $S$.  Let $d$ be the largest natural number such that $F \in \cI_Z^d$, so $F$ defines a nonzero section of $\cI_Z^d/\cI_Z^{d+1} \subset \O_{\nzs}$.  By abuse of notation we denote this regular function on $\nzs$ by $F$.  Under the inherited $\Gamma$ action, $F$ has degree 2.  Hence we obtain a new LG pair $(\nzs,F)$.

Consider the sheaf of algebras on $S$ given by
\[ \O_S[t,t^{-1}\cI_Z] = \dotsm \oplus t^{-2}\cI_Z^2 \oplus t^{-1} \cI_Z \oplus \O_S \oplus t\O_S \oplus \dotsm \]
and let $\St$ be the spectrum of this sheaf of algebras.  Note that $\St$ admits a map $\pi:\St \to \A^1$.  Write $\St_\lambda = \pi^{-1}(\lambda)$ for any point $\lambda \in \A^1$.  This map has the property that $\St_\lambda \cong S$ for any $\lambda \neq 0$ while $\St_0= \nzs$.  For this reason $\St$ is called the \emph{deformation to the normal cone}.

Since $\cI_Z$ is $\Gamma$-equivariant, the sheaf of algebras $\O_S[t,t^{-1}\cI_Z]$ is $\Gamma$-equivariant.  Hence $\St$ carries a $\Gamma$ action.  Observe that $t$ is fixed by $\Gamma$.  Each fiber $\St_\lambda$ is $\Gamma$ invariant and the induced $\Gamma$ actions on $\St_1$ and $\St_0$ agree with the actions we have already considered. Observe that $t^{-d}F$ is regular function on $\St$ having degree 2 for the $\Gamma$ action on $\St$.  So we obtain an LG pair $(\St,t^{-d}F)$.  The function $t^{-d}F$ has the property that its restrictions to $\St_1$ and to $\St_0$ are the functions we are calling $F$, by abuse of notation.  Hence the inclusions $(S,F) \to (\St,t^{-d}F)$ and $(\nzs,F) \to (\St,t^{-d}F)$ are morphisms of LG pairs.  

Assume now that $d=2$.  We write $T := \Cx$ and we will consider an action of $T$ on the LG pair $(\St,\Ft)$ where $\Ft := t^{-2}F$.  The notation is to avoid confusion between the two $\Cx$ actions that exist in this setting.  First note that there is a $\Cx$ action on $\St$ lifting the $\Cx$ action on $\A^1$.  This corresponds to the graded structure on the sheaf of algebras $\O_S[t,t^{-1}\cI_Z]$ where $\deg(t) = 1$.  To obtain the $T$ action we combine this action with the extant action.  We have constructed a $\Gamma \times \Cx$ action on $\St$ and we let $T$ act via the diagonal homomorphism $T \to \Gamma \times \Cx = \Cx \times \Cx$, $\lambda \to (\lambda,\lambda)$.  Under this action $t$ has weight one and $\Ft$ is $T$ invariant.

The variety $\St$ has commuting actions of $T$ and $\Gamma$.  Recall that a graded D-brane $\cE$ on $([\St/T],\Ft)$ is a graded D-brane on $(\St,\Ft)$ with a $T$-equivariant structure such that $d_\cE$ is $T$-equivariant.  We now consider the category
\[ \DBr([\St/T],\Ft) \]
whose objects are $T$-equivariant graded D-branes and where the complex of morphisms between two $T$-equivariant graded D-branes $\cE,\cF$ is
\[
 \Hom_{\DBr}(\cE,\cF)^T,
\]
the subcomplex of $T$-invariants.  Associated to the morphisms of LG pairs
$$ (S,F),(\nzs,F) \into (\St,\Ft) \to ([\St/T],\Ft) $$
we have pullback functors
\[
 \DBr([\St/T],\Ft) \to \DBr(S,F), \DBr(\nzs,F).
\]

Suppose that $(S,F)$ is obtained as in the previous section.  This means that there is a smooth quasiprojective variety $Y$, and a vector bundle $\cV$ over $Y$ with a regular section $s$.  Then $S$ is the total space of $\cV^\vee$ and $F$ is the function corresponding to the section $s$.  The $\Cx$ action on $S$ is derived from the natural action of $\Cx$ on $\cV^\vee$ by having $\lambda$ in the new action act by $\lambda^2$ in the old action.  Let $Z \subset Y$ be the zero locus of $s$ and view $Z$ as embedded in $S$ along the zero section.

Consider the LG pair $(\nzs,F)$.  In this situation
\[
 \nzs = N_{Z/Y}\oplus \cV^\vee|_Z \cong \cV|_Z \oplus \cV^\vee|_Z
\]
and the induced grading comes from doubling the natural action of $\Cx$ by scaling the $\cV^\vee$ summand and fixing the $\cV$ summand.  Moreover, the function $F$ on $S$ comes from contracting a point of $\cV^\vee$ with the section $s$.  Since $s$ vanishes along $Z$, the induced function on the normal bundle comes from contracting the $\cV|_Z$ summand with the $\cV^\vee|_Z$ summand.  Hence the LG pair $(\nzs,F)$ has the form considered in Remark \ref{kp-remark}.

Recall that there are canonical graded D-branes $\cK_N$ and $\cK_S$ on $(\nzs,F)$ and $(S,F)$ respectively.  In fact, we can interpolate between these with a graded D-brane on $([\St/T],\Ft)$.  Indeed, we form
\[ \cK_{\St} = (\wedge^\bt \wt{\cV}, t^{-1} s\wedge + t^{-1} \alpha \vee) \]
where $\wt{\cV}$ here denotes the pullback of $\cV$ to $\St$ under the natural map $\St \to S \to Y$.  This is equivariant since $s$ and $\alpha$ both have degree 1 for the LG $\Cx$ action.  We have $\cK_{\St}|_{\St_1} = \cK_S$ and $\cK_{\St}|_{\St_0} = \cK_N$.  Therefore $\cK_{\St}$ deforms $\cK_S$ into $\S$.

Let $\O(1)$ denote an arbitrary very ample line bundle on $Y$ and let $\O_{\St}(1)$ be its pull back under the map $\St \to Y$.  Since $\St \to Y$ is both $T$ and $\Gamma$-equivariant, $\O_{\St}(1)$ is both $T$ and $\Gamma$-equivariant.  Therefore, if $\cE$ is a $T$-equivariant graded D-brane on $(\St,\Ft)$ then so is $\cE(n) := \cE \tensor \O_{\St}(1)^{\tensor n}$ for any $n \in \Z$.  Moreover, the restrictions $\O_{\St}(1)|_{\St_0}$ and $\O_{\St}(1)|_{\St_1}$ are ample on $\St_0$ and $\St_1$ respectively.

Observe that since $S$ is the total space of $\cV^\vee$, there is a closed immersion $\wt{i}:\cV^\vee|_Z\times \A^1 = \Sbr \times \A^1 \into \St$.  Note that $\Sbr \times \A^1$ is $T$ and $\Gamma$ invariant.  Let $\wt{p}:\Sbr \times \A^1 \to Z$ be the obvious projection, which is flat and affine.  The pullback of a complex of quasicoherent sheaves under $\wt{p}^*$ gives a curved graded quasicoherent sheaf on $([\Sbr \times \A^1 /T],0)$.  We can push such a sheaf forward to obtain an object of $\DQcoh([\St/T],\Ft)$.  Let $\cC_Z$ be the full subcategory of $\DBr([\St/T],\Ft)$ consisting of objects isomorphic in $\DQcoh([\St/T],\Ft)$ to those of the form $\wt{i}_*\wt{p}^*\cG$ where $\cG$ is a coherent complex on $Z$.

\begin{thm} \label{restrictions}
The restrictions
$$\cC_Z \to \DBr(S,F)$$
and
$$ \cC_Z \to \DBr(N_{Z/S},F) $$
are quasi-equivalences.
\end{thm}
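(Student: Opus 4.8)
The plan is to factor both restriction functors through one functor out of $\Dcoh(Z)$, and to deduce the statement from the results already in hand for the individual pairs $(S,F)$ and $(\nzs,F)$. Assume (as suffices for the application to Orlov's theorem) that $Z$ is nonsingular, so that $\nzs$ and $\St$ are nonsingular and $\Gamma$-quasiprojective, resp.\ $\Gamma\times T$-quasiprojective, and the conclusions of Lemma \ref{generation} apply to them. By the discussion preceding the theorem, $(\nzs,F)$ is an LG pair of the vector-bundle-with-regular-section type of Section \ref{s.vb}: it is the degenerate pair of Remark \ref{kp-remark}, with base the total space of $\cV|_Z$ over $Z$, tautological section, and vanishing locus $Z$. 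Hence Theorem \ref{main-equivalence} supplies quasi-equivalences $i_S\colon\Dcoh(Z)\iso\DMF(S,F)$ and $i_N\colon\Dcoh(Z)\iso\DMF(\nzs,F)$, where $i_S=(i_S)_*(p_S)^*$ and $i_N=(i_N)_*(p_N)^*$ in the notation of that theorem.

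Next I would record the geometry of the functors. Since $\wt i\colon\Sbr\times\A^1\into\St$ is affine and $\wt p\colon\Sbr\times\A^1\to Z$ is flat and affine, the assignment $\cG\mapsto\wt i_*\wt p^*\cG$, formed with a fixed system of compatible \v{C}ech covers exactly as in the proof of Theorem \ref{main-equivalence}, defines a DG functor $\Psi\colon\Dcoh(Z)\to\DBr([\St/T],\Ft)$ whose essential image is, by definition, $\cC_Z$. Restricting to the fibres $\St_1\cong S$ and $\St_0\cong\nzs$ and using flatness of $\Sbr\times\A^1\to\A^1$ gives natural isomorphisms $(\wt i_*\wt p^*\cG)|_{\St_1}\cong i_S(\cG)$ and $(\wt i_*\wt p^*\cG)|_{\St_0}\cong i_N(\cG)$, so that the two restriction functors $\rho_1\colon\cC_Z\to\DBr(S,F)$ and $\rho_0\colon\cC_Z\to\DBr(\nzs,F)$ satisfy $\rho_1\circ\Psi\he i_S$ and $\rho_0\circ\Psi\he i_N$. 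It therefore suffices to prove that $\Psi\colon\Dcoh(Z)\to\cC_Z$ is a quasi-equivalence: then $\rho_1$ and $\rho_0$ are quasi-equivalences by two-out-of-three. Essential surjectivity of $\Psi$ onto $\cC_Z$ holds by construction — every $\wt i_*\wt p^*\cG$ with $\cG$ coherent admits a graded D-brane model by the resolution argument of Lemma \ref{generation}, which applies since $\St$ is nonsingular and $\Gamma\times T$-quasiprojective — so the content is that $\Psi$ is quasi-fully-faithful. I would prove this by transcribing the full-faithfulness argument of Theorem \ref{main-equivalence}. First, $\cK_{\St}$ is a model for $\wt i_*\wt p^*\O_Z=\wt i_*\O_{\Sbr\times\A^1}$: the natural map $\cK_{\St}\to\wt i_*\O_{\Sbr\times\A^1}$ has cone zero in $\DQcoh([\St/T],\Ft)$ by the purely homological argument of Lemma \ref{Koszul}. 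Hence, for $j,j'\in\Z$, the Hom-complex $\Hom_{\cC_Z}(\wt i_*\wt p^*\O_Z(j),\wt i_*\wt p^*\O_Z(j'))$ is quasi-isomorphic to the $T$-invariant part of the $\Gamma$-equivariant \v{C}ech hypercohomology of $\sHom(\cK_{\St},\wt i_*\wt p^*\O_Z)(j'-j)$, and $\sHom(\cK_{\St},\wt i_*\wt p^*\O_Z)$ is canonically $\wt i_*$ of the Koszul complex resolving $\O_{Z\times\A^1}$ on $\Sbr\times\A^1=\cV^\vee|_Z\times\A^1$ — a bounded complex of $\Gamma\times T$-equivariant vector bundles, just as $\cK^\vee|_{\Sbr}$ is the Koszul resolution in Theorem \ref{main-equivalence}. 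Pushing this complex down to $Z$ along the flat affine map $\wt p$ and then taking $\Gamma$-equivariant maps of all weights together with $T$-invariants, the factors carrying strictly positive weight (the fibre directions of $\cV^\vee|_Z$ for $\Gamma$, and the $t$-direction for $T$) contribute only in weight zero; the resulting bicomplex collapses and computes $R\Hom_{\Dcoh(Z)}(\O_Z(j),\O_Z(j'))$, with $\rho_1$ and $\rho_0$ — i.e.\ restriction to $t=1$ and to $t=0$ — inducing this quasi-isomorphism. Since $\langle\O_Z(i)\rangle_{i\in\Z}=\Perf(Z)$ by \cite{BvdB} and $\Psi$ is exact and commutes with direct sums, quasi-full-faithfulness first propagates to $\langle\O_Z(i)\rangle_{i\in\Z}$ and then, using $\Dcoh(Z)\subset\langle\O_Z(i)\rangle_{i\in\Z}^{\oplus}$ (after \cite{N}), to all of $\Dcoh(Z)$; together with the preceding reduction this yields the theorem.

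The step I expect to be the main obstacle is the hypercohomology computation just sketched: one must check carefully that combining $T$-invariance with all-weight $\Gamma$-equivariant Homs really does kill the deformation parameter $t$ and the fibre directions of $\cV^\vee|_Z$, leaving exactly $R\Hom_{\Dcoh(Z)}$, and — crucially — that the two fibre-restriction maps genuinely induce quasi-isomorphisms rather than merely maps of complexes. This rests on correct bookkeeping of the $\Gamma$- and $T$-weights of all the $t$-twisted bundles entering the definitions of $\St$ and $\cK_{\St}$, and, behind the scenes, on the flatness of $\St\to\A^1$ and of $\Sbr\times\A^1\to\A^1$ which makes the relevant Koszul complexes behave well under restriction to the two fibres.
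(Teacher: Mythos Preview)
Your proposal is correct and follows essentially the same strategy as the paper's proof: factor both restriction functors through $\Psi=\wt{i}_*\wt{p}^*\colon\Dcoh(Z)\to\cC_Z$, identify $\rho_1\circ\Psi$ and $\rho_0\circ\Psi$ with the two instances of the equivalence $i_*p^*$ from Theorem~\ref{main-equivalence}, prove $\Psi$ is fully faithful by transcribing the Koszul computation of that theorem (using $\cK_{\St}\to\wt{i}_*\wt{p}^*\O_Z$ and the identity $\sfH^\bt(Z\times\A^1,\O_{Z\times\A^1}(i))^T=\sfH^\bt(Z,\O_Z(i))$), and conclude by two-out-of-three. The paper is slightly more explicit than you are about why the D-brane resolution of $\wt{i}_*\wt{p}^*\cG$ remains a resolution after restriction to each fibre---for $\St_1$ it uses that restriction is ``restrict to $S\times\Cx$ then take $T$-invariants,'' hence exact, and for $\St_0$ it uses that $\wt{i}_*\wt{p}^*\cG$ has no $t$-torsion---but your appeal to flatness of $\Sbr\times\A^1\to\A^1$ is getting at the same point.
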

\begin{proof}
Given a coherent complex $\cG$ on $Z$, there is a finite complex $\cE^\bt$ of graded D-branes that resolves $\wt{i}_*\wt{p}^*\cG$, as in \ref{generation}.  Consider the restriction of $\cE^\bt \to \wt{i}_*\wt{p}^*\cG$ to $S = \St_1$.  Note that restriction of $T$-equivariant sheaves on $\St$ to $S$ is the same as restriction to $\St\setminus \St_0 \cong S \times \Cx$, followed by taking $T$ invariants.  Hence it is exact and we see that $\cE^\bt|_S$ resolves $\wt{i}_*\wt{p}^*\cG|_S = (i_1)_*(p_1)^*\cG$.  Furthermore, a sheaf of the form $\wt{i}_*\wt{p}^*\cG$ has no $t$ torsion.  So the restriction of $\cE^\bt \to \wt{i}_*\wt{p}^*\cG$ to $\St_0 = \nzs$ remains exact as well.  Clearly $\wt{i}_*\wt{p}^*\cG|_{\nzs} = (i_0)_*(p_0)^*\cG$.  Hence the compositions of $\wt{i}_*\wt{p}^*$ with the restrictions to $S$ and $\nzs$ are just the two versions of the equivalence $i_*p^*$.  
On the other hand $\wt{i}_*\wt{p}^*$ is fully faithful by essentially the same argument as Theorem \ref{main-equivalence}, using the fact that $\cK_{\St}(i) \to \wt{i}_*\wt{p}^*\O_Z(i)$ is an isomorphism in $\DQcoh([\St/T],\Ft)$ and that $\sfH^\bt(Z\times \A^1,\O_{Z\times\A^1}(i))^T = \sfH^\bt(Z,\O_Z(i))$.
\end{proof}

\begin{rmk}
We can also prove directly that the restrictions are fully faithful on a certain category.
Indeed, I claim that the restriction functors
\[ \langle \cK_{\St}(i) \rangle_{i \in \Z}^\oplus \to \DQcoh(\nzs,F) \]
and
\[ \langle \cK_{\St}(i) \rangle_{i \in \Z}^\oplus \to \DQcoh(S,F) \]
are fully faithful.  Since the restriction functors preserve direct sums, to show that these functors are fully faithful it suffices to show that
\begin{align*}
 \Hom(\cK_{\St}(i),\cK_{\St}(j)) &\to \Hom(\cK_N(i),\cK_N(j)), \\
 \Hom(\cK_{\St}(i),\cK_{\St}(j)) &\to \Hom(\cK_S(i),\cK_S(j)).
\end{align*}
are isomorphisms.  Now observe that there is a $T$-equivariant quasi-isomorphism $\sEnd(\cK_{\St}) \to \O_{Z\times \A^1}$, since $\sEnd(\cK_{\St})$ is a Koszul complex.  Moreover, the restriction of this quasi-isomorphism gives the quasi-isomorphisms
\begin{align*}
\sEnd(\cK_S) &\to \O_Z, \\
\sEnd(\cK_N) &\to \O_Z.
\end{align*}
Using the main spectral sequence \eqref{main.ss} we see that the induced maps on the second page are
\[
 \xymatrix@C=0pt{
& \ar[dl] \sfH^\bt(\St, \O_{Z\times\A^1}(j-i))_\bt^T \ar[dr] & \\
\sfH^\bt(Z,\O_Z(j-i)) \cong \sfH^\bt(S, \O_Z(j-i))_\bt & & \sfH^\bt(\nzs, \O_Z(j-i))_\bt \cong \sfH^\bt(Z,\O_Z(j-i))
}
\]
Of course $\sfH^\bt(\St,\O_{Z\times\A^1}(j-i))_\bt = \sfH^\bt(Z,\O_Z(j-i))_\bt[t]$.  The $T$-invariants are just $\sfH^\bt(Z,\O_Z(j-i))$ and we see that these maps are isomorphisms at the second page of the relevant spectral sequences.  Hence the maps above are isomorphisms.

In particular, the restrictions are fully faithful on $\langle \cK_{\St}(i)\rangle^\oplus_{i \in \Z} \cap \DMF([\St/T],\Ft)$.  However, without comparing everything to $\Dcoh(Z)$ it is not clear that the category $\langle \cK_{\St}(i)\rangle^\oplus_{i \in \Z} \cap \DMF([\St/T],\Ft)$ is large enough to guarantee that every graded D-brane on $(S,F)$ or $(\nzs,F)$ is in the essential image of restriction.  Nonetheless, it follows from \ref{restrictions} that we have
$$\cC_Z = \langle \cK_{\St}(i)\rangle^\oplus_{i \in \Z} \cap \DMF([\St/T],\Ft).$$
\end{rmk}

\section{Application to Orlov's theorem}\label{s.app}
Now, we will see how we may combine the results of the previous sections with Segal's theorem to derive Orlov's theorem.  Suppose $X \subset \P=\P^{N-1}$ is a smooth Calabi-Yau complete intersection. Let $W_1,\dotsc,W_r \in \C[x_1,\dotsc,x_N]$ be homogeneous equations for $X$ with $d_i = \deg(W_i)$.  The Calabi-Yau condition is $\sum_{i=1}^r{d_i} = N$.  There are several relevant LG pairs.  First, we can combine the $W_i$ into a section $s_W$ of the bundle $\oplus_{i=1}^r{\O(d_i)}$ on $\P$.  This section gives rise to a function $W$ on the total space $Y$ of the bundle $\oplus_{i=1}^r{\O(-d_i)}$.  This function is linear on each fiber of the projection $p:Y \to \P$.  Since $Y$ is the total space of a vector bundle it has an action of $\Gamma$.  However, as in section 3, we consider the new ``doubled'' action induced by the squaring endomorphism of $\Gamma$.  Let $\O_Y(a) = \pi^*\O(a)$ and note that $\oplus_{i=1}^r \O_Y(-d_i)$ has a tautological section $s$.  The function $W$ can be factored as
\[ \xymatrix{ \O_Y \ar[r]^(.3){s_W} & \bigoplus_{i=1}^r\O_Y(d_i) \ar[r]^(.6){\vee s} & \O_Y } \]
where $\vee s$ denotes contraction with $s$.

We now describe Segal's theorem.  To begin with we consider $V:=\spec(\C[x_1,\dotsc,x_N,p_1,\dotsc,p_r])$ with a $G :=\Cx$ action and a $\Gamma$ action.  (As in previous sections we attempt to reduce confusion by introducing the notation $G$ and $\Gamma$ to differentiate between $\Cx$ actions.)  Under the first action, $\deg(x_i) = 1$ and $\deg(p_i) = -d_i$.  Under the $\Gamma$ action, we have $\deg(x_i) = 0$ and $\deg(p_i) = 2$.  The function $F = \sum_{i=1}^r{p_iW_i}$ is fixed by $G$ and and $\Gamma$-weight 2.  There are two possible open sets of semistable points in the GIT sense in $V$ associated to the identity and inversion characters of $G$.  Write $V_+$ and $V_-$ for the points semistable with respect to the identity and inversion characters, respectively.  In general, $V_+$ and $V_-$ are the complements of the hyperplanes defined by the vanishing of the positive and negative weight variables respectively.  So in our case, $V_+$ is the complement of the plane $x_i=0$ and $V_-$ is the complement of the plane $p_j = 0$.  We see that $[V_+/G] \cong Y$. We will describe $[V_-/G]$ in more detail below.  Both semistable sets are $\Gamma$ invariant and hence we obtain three LG pairs $([V/G],F),(Y,W),([V_-/G],W)$ fitting into a diagram
\[
 \xymatrix{ \DBr([V_-/G],W) & \ar[l]_(.45){j^*} \DBr([V/G],F) \ar[r]^(.55){j^*} & \DBr(Y,W) }
\]
Let $\mc{G}_t$ be the full DG subcategory of $\DBr([V/G],F)$ whose objects are graded D-branes $\cE$ whose underlying $G$-equivariant vector bundle is a direct sum of character line bundles in the set $\O_V(t),\dotsc,\O_V(t+N-1)$.  We can now formulate Segal's theorem.
\begin{thm*}[3.3, \cite{Se}]
The functors
\[
 \xymatrix{ \DBr([V_-/G],W) & \ar[l]_(.2){j^*} \mc{G}_t \ar[r]^(.35){j^*} & \DBr(Y,W) }
\]
are quasi-equivalences.
\end{thm*}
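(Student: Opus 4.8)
The plan is to run the standard grade-restriction (``window'') argument for a variation of GIT, carried out in the curved setting, and to use the Calabi--Yau relation $\sum_i d_i = N$ precisely to make the single window $\mc{G}_t$ serve both quotients. Write $Z_+ = \{x_1 = \dots = x_N = 0\}$ and $Z_- = \{p_1 = \dots = p_r = 0\}$ for the two $G$-unstable loci, so that in the \v{C}ech models $j^*$ on each side is restriction to the complement of $Z_\pm$, i.e. a Verdier localization of $\DBr([V/G],F)$ by the subcategory of branes supported on $Z_\pm$. Since each $W_i$ is homogeneous of positive degree, $W_i \in (x_1,\dots,x_N)$, so $F = \sum_i p_i W_i$ vanishes on $Z_+$, and it vanishes on $Z_-$ trivially; hence $\O_{Z_\pm}$ is a $0$-curved graded sheaf on $([V/G],F)$, and the Koszul complexes $K_+$ and $K_-$ resolving $\O_{Z_+}$ and $\O_{Z_-}$ (on the regular sequences $(x_i)$ and $(p_j)$, with the appropriate $G$- and $\Gamma$-twists) define objects of $\DBr([V/G],0)$ which become acyclic after restriction to $[V_+/G]$ and $[V_-/G]$ respectively. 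These are the tool for moving the $G$-weights of a brane.

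\emph{Full faithfulness.} For $\cE,\cF \in \mc{G}_t$ the graded sheaf $\sHom(\cE,\cF)$ has underlying $\Gamma\times G$-equivariant bundle a direct sum of character line bundles $\O_V(m)$ with $m$ in the symmetric range $\{1-N,\dots,N-1\}$. The cone on the comparison $\Hom_{\DBr([V/G],F)}(\cE,\cF) \to \Hom_{\DBr([V_\pm/G],W)}(j^*\cE,j^*\cF)$ is computed by the $G$-equivariant local cohomology of $\sHom(\cE,\cF)$ along $Z_\pm$; by functoriality this is the totalization (for the $\sHom$-differential) of a bounded complex with terms $\bigl[\sfH^\bt_{Z_\pm}(\O_V(m))\bigr]^G$. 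A monomial count shows $\sfH^\bt_{Z_+}(\O_V(m))$ is concentrated in cohomological degree $N$ with $G$-weights $\le -N$, so its invariants vanish once $m \ge 1-N$; and $\sfH^\bt_{Z_-}(\O_V(m))$ is concentrated in degree $r$ with $G$-weights $\ge \sum_i d_i$, so its invariants vanish once $m \le (\sum_i d_i)-1$. Both ranges contain $\{1-N,\dots,N-1\}$ -- and the $Z_-$ inclusion is exactly where $\sum_i d_i = N$ is used -- so the comparison maps are quasi-isomorphisms and $j^*$ is quasi-fully-faithful on $\mc{G}_t$ for both restrictions; the reduction of the general curved $\sHom$ to its graded pieces is done termwise as above, or via the spectral sequence \eqref{main.ss}.

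\emph{Essential surjectivity.} Because each $j^*$ is a Verdier localization (coherent curved sheaves on $[V_\pm/G]$ extend to $[V/G]$, and then Lemma~\ref{generation} replaces such an extension by a graded D-brane restricting to it), it suffices to start from a graded D-brane $\cE$ on $([V/G],F)$ and replace it, without changing $j^*\cE$ up to isomorphism, by one using only the characters $\O_V(t),\dots,\O_V(t+N-1)$. If some summand $\O_V(a)$ of the underlying bundle has $a$ above the window, then on $[V_\pm/G]$ one has $\O_V(a) \he \mathrm{tot}\bigl[\bigoplus\O_V(a-N)\to\dots\to\bigoplus\O_V(a-1)\bigr]$ (the twist $K_\pm(a)$ becomes a resolution of the sheaf $\O_{Z_\pm}(a)$, which is zero there); substituting this totalization for that summand of $\cE$ and re-assembling the curved differential -- possible because $\O_{Z_\pm}\tensor\cE$, differential included, is acyclic on $[V_\pm/G]$ -- produces a new graded D-brane on $([V/G],F)$ with the same restriction and with strictly smaller maximal above-window weight; symmetrically for summands below the window, reading $K_\pm$ in the other direction. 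The Koszul complex on $(x_i)$ shifts $G$-weights by at most $N$ and the one on $(p_j)$ by at most $\sum_i d_i = N$, so the width-$N$ window is stable under these moves and the process terminates inside $\mc{G}_t$, exhibiting $j^*\cE$ as the image of a window brane.

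\emph{Main obstacle.} The window arithmetic is where the content sits, but it is routine; the delicate point is the essential-surjectivity step -- making the operation ``substitute a truncated Koszul complex for a summand and rebuild the curved differential'' precise, proving termination, and checking compatibility with the $\Gamma$-grading and the \v{C}ech model that defines $\DBr$. Equivalently, one can organize the whole argument through the semiorthogonal decomposition of $\DQcoh([V/G],F)$ attached to the (single-stratum) Kempf--Ness stratification, with $\mc{G}_t$ the window block: full faithfulness is the local-cohomology vanishing, essential surjectivity is the statement that the window block surjects onto the quotient, and either way the Calabi--Yau hypothesis enters precisely to identify the window of width $N$ coming from $Z_+$ with the window of width $\sum_i d_i$ coming from $Z_-$.
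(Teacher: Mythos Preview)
The paper does not prove this statement: it is quoted as Theorem~3.3 of \cite{Se} and used as a black box, with only a remark noting that Segal's result applies to any Calabi--Yau linear $\Cx$-action. So there is no ``paper's own proof'' to compare against.

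That said, your sketch is essentially Segal's own argument --- the grade-restriction/window method --- carried over to the curved setting. The full-faithfulness computation via $G$-invariant local cohomology along $Z_\pm$ is correct and is exactly where the Calabi--Yau condition $\sum_i d_i = N$ enters, matching the two window widths. The essential-surjectivity step is, as you yourself flag, the part that needs real work: the operation ``replace a summand by a truncated Koszul resolution and rebuild the curved differential'' is not literally well-defined as stated (the curved differential mixes summands, so one cannot simply swap a single $\O_V(a)$), and the cleaner formulation is either to tensor the whole brane with $K_\pm$ and peel off an appropriate stupid truncation, or to set up the semiorthogonal decomposition of $\DQcoh([V/G],F)$ along the Kempf--Ness stratification and project into the window component. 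You gesture at the second option in your final paragraph, and that is indeed how Segal organizes it; so your proposal is a faithful outline of the cited proof rather than an alternative to it, with the expected soft spot precisely where you locate it.
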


\begin{rmk}
This is a special case of the theorem that Segal proves.  He considers a linear action of $\Cx$ on a vector space $V$.  He assumes the action is Calabi-Yau, which means that the action map $\Cx \to \GL(V)$ factors through a $\SL(V) \subset \GL(V)$.  Then for a general $\Gamma$ action and potential $W$ on $V$, he constructs a family of equivalences as above between $\DBr([V_+/G],F)$ and $\DBr([V_-/G],F)$.
\end{rmk}

In conclusion, we can summarize the geometric picture in the following diagram, where the solid arrows are DG functors which are quasi-equivalences when labeled by $\he$.  The dashed lines indicate the ``phenomena responsible for the various equivalences and comparisons and the dotted arrow on the left represents the fully faithful functor between the homotopy categories of $\Perf(X)$ and $\DBr([V_-/G],W)$ that one obtains by going around the diagram counter clockwise.
\[
\xymatrix@C=5pt{ & \ar[dl]_{j_-^*}^\he \mc{G}_t \ar[dr]^{j_+^*}_\he & \\
\DBr([V_-/G],W) \ar@{--}[rr]^{\txt{``Segal inversion''}} & & \DBr(Y,W) \ar@{--}@/_1pc/[ddl]_{\txt{Deformation}} \\
\Perf(X) \ar[dr]^{\txt{Kn\"{o}rrer \\ periodicity}}_\he \ar@{..>}[u]^{\txt{Orlov type theorem}} & & \cC_Z \subset \DBr(\wt{Y},t^{-2}W)^T \ar[u]^{j^*}_{\he} \ar[dl]^{j^*}_\he \\
& \DBr(N_{X/Y}, W) }
\]
The quasi-equivalences induce triangulated equivalences in the homotopy categories.  There is another picture at the level of homotopy categories.
$$
\xymatrix{
\Dcoh(X) \ar@{..>}[r]\ar[d]^{i_*p^*} & [\DBr([V_-/G],W)] \\
[\DBr(Y,W)] & [\cG_t] \ar[u]^{j^*} \ar[l]^{j^*}
}
$$

If $r = 1$, $[V_-/G]$ has a simple description and $[\DBr([V_-/G],W)]$ is naturally equivalent to the category of graded matrix factorizations.  In this case $V=\spec(\C[x_1,\dotsc,x_N,p])$ and $V_- = \spec(\C[x_1,\dotsc,x_N,p,p^{-1}])$.  The ring $\C[x_1,\dotsc,x_n,p]$ has two gradings and the degrees are $\deg(x_i)=(1,0)$ and $\deg(p)=(-N,2)$.  Let $R=\C[x_1,\dotsc,x_N]$.  The finitely generated bigraded projective modules over $R[p^{-1}]$ are direct sums of the modules $R[p^{-1}](a,b)$, the free $R[p^{-1}]$ module generated by an element of degree $(a,b)$.  Note that $R[p^{-1}]$ is also generated by $p$ as a module and therefore $R[p^{-1}] \cong R[p^{-1}](-N,2)$ as bigraded modules.  The only units of $R[p^{-1}]$ not in degree zero are the powers of $p$ and hence there are no isomorphisms between the modules in the collection $R[p^{-1}](a,b)$ that do not come from $R[p^{-1}] \cong R[p^{-1}](-N,2)$ by shifts and compositions.  We see that the bigraded modules $R[p^{-1}](a,0), R[p^{-1}](a,1)$ are all distinct and that every bigraded projective module is isomorphic to a direct sum of these.

An object of $\DBr([V_-/G],W)$ is a bigraded projective $R[p^{-1}]$ module $\cE = \bigoplus_j R[p^{-1}](a_j,b_j)$ and an endomorphism $d$ of degree $(0,1)$ satisfying $d^2= pf$, where $f$ is the defining equation of our hypersurface.  Clearly
\begin{align*}
\Hom_{R[p^{-1}]}(R[p^{-1}](a,0),R[p^{-1}](a',0))_{(0,1)} &= \Hom_{R[p^{-1}]}(R[p^{-1}](a,1),R[p^{-1}](a',1))_{(0,1)} = 0 \\
\Hom_{R[p^{-1}]}(R[p^{-1}](a,1),R[p^{-1}](a',0))_{(0,1)} &= (R[p^{-1}])_{(a'-a,0)} = R_{a'-a} \\
\Hom_{R[p^{-1}]}(R[p^{-1}](a,0),R[p^{-1}](a',1))_{(0,1)} &= p(R[p^{-1}])_{(a'-a,0)} = p R_{a'-a}
\end{align*}
This means that the data to specify a graded D-brane on $([V_-/G],W)$ is the same as the data required to specify a graded matrix factorization of $f$ over $R=\C[x_1,\dotsc,x_N]$.  Given two graded D-branes $\cE,\cF$ on $([V_-/G],W)$ we write $E,F$, respectively, for the corresponding $R[p^{-1}]$ modules.  Since
\[ \Hom_{\DBr}(\cE,\cF) = \Hom_{R[p^{-1}]}(E,F)_{(0,*)} \]
we see that $[\Hom_{\DBr}(\cE,\cF)]$ is given by the space of graded chain maps $E \to F$ modulo nullhomotopic chain maps.  Hence $[\DBr([V_-/G],W)]$ is equivalent to the category of graded matrix factorizations and clearly the equivalence is compatible with the triangulated structure.

So we obtain as a corollary:
\begin{cor}
There is a family of equivalences $\Dcoh(X) \he \MF(W)$ when $X$ is a nonsingular Calabi-Yau hypersurface of a projective space, with defining equation $W$.
\end{cor}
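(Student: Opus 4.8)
The plan is to concatenate the equivalences of Sections \ref{s.vb}--\ref{s.app} with Segal's theorem and then read off the statement at the level of homotopy categories; there is no new geometric input. First I would pin down the data for $r = 1$. The Calabi--Yau condition $\sum_i d_i = N$ forces $d_1 = N$, so $X \subset \P = \P^{N-1}$ is a nonsingular hypersurface of degree $w = N$ with defining equation $W \in R := \C[x_1,\dotsc,x_N]$, which is Orlov's $w = n+1$ case with $n = N-1$. Let $Y$ be the total space of $\O_\P(-w)$ with the doubled $\Gamma$ action of Example \ref{main.example}, and let $W$ also denote the fibrewise-linear function on $Y$ induced by the section $s_W$ of $\O_\P(w)$. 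Pulling back $\O_\P(1)$ makes $Y$ $\Gamma$-quasiprojective, so Theorem \ref{main-equivalence} applies with base $\P$, bundle $\cV = \O_\P(w)$, regular section $s = s_W$, and zero locus $Z = X$; it gives an equivalence $i_*p^* : \Dcoh(X) \iso \DMF(Y,W)$, and $\DMF(Y,W) = [\DBr(Y,W)]$ by the Proposition identifying these two categories. (Alternatively one may deform $Y$ to $N_{X/Y}$ via Theorem \ref{restrictions} and apply the global Kn\"{o}rrer periodicity of Remark \ref{kp-remark}; the outcome is the same.)

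Next I would invoke Segal's theorem as recalled in Section \ref{s.app}: with $V = \spec(R[p])$, the stated $G$- and $\Gamma$-gradings, and potential $F = pW$, the restriction functors $j^* : \mc{G}_t \to \DBr([V_-/G],W)$ and $j^* : \mc{G}_t \to \DBr(Y,W)$ are quasi-equivalences for every $t \in \Z$. Passing to homotopy categories and composing with the equivalence of the previous paragraph gives, for each $t$, a triangulated equivalence $\Dcoh(X) \he [\DBr(Y,W)] \he [\mc{G}_t] \he [\DBr([V_-/G],W)]$. It then remains to apply the explicit computation carried out just before the corollary: for $r = 1$ one has $V_- = \spec(R[p,p^{-1}])$, the finitely generated bigraded projective $R[p^{-1}]$-modules are the sums of the $R[p^{-1}](a,0)$ and $R[p^{-1}](a,1)$, and the maps of degree $(0,1)$ between them recover exactly the underlying data and morphism spaces of matrix factorizations of $W$ over $R$, the relation $d^2 = pW$ becoming the graded matrix factorization relation. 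Hence $[\DBr([V_-/G],W)] \cong \MF(W)$ as triangulated categories, and chaining the equivalences above produces the asserted family, indexed by $t \in \Z$, of equivalences $\Dcoh(X) \he \MF(W)$.

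The only point that demands care --- and it is bookkeeping rather than mathematics, since each ingredient already supplies a genuine (quasi-)equivalence --- is keeping the two uses of the letter $Y$ apart (the base versus the total space of $\cV^\vee$ in Section \ref{s.vb}, and the total space of $\O_\P(-w)$ in Section \ref{s.app}), together with verifying that the chain of functors is compatible enough that the composite is an equivalence of triangulated categories and not merely a list of separate equivalences whose homotopy categories happen to agree.
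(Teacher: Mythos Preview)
Your proposal is correct and matches the paper's approach exactly: the corollary is obtained by chaining Theorem \ref{main-equivalence} (or equivalently Theorem \ref{restrictions} together with Remark \ref{kp-remark}) with Segal's quasi-equivalences through $\mc{G}_t$, and then with the explicit identification of $[\DBr([V_-/G],W)]$ with $\MF(W)$ carried out immediately before the corollary. The paper does not give a separate proof beyond ``So we obtain as a corollary,'' and your write-up spells out precisely the intended concatenation, including the correct indexing of the family by $t \in \Z$.
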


\section{Localization}\label{s.loc}
In this section we will formulate and prove a precise version of the statement that for an LG pair $(S,F)$ the category $\DBr(S,F)$ only depends on a formal neighborhood of the singular locus of the zero locus of $F$, when $S$ is quasi-projective.  To make this precise we need a notion of graded D-brane that makes sense on a formal neighborhood of the zero locus of $F$.

Let $(S,F)$ be an LG pair with $S$ nonsingular and quasi-projective with an equivariant ample line bundle $\cL$.  Recall that the \emph{Jacobi ideal (sheaf)} $J(F)$ of $F$ is defined to be the image of the map $\mc{T}_S \to \O_S$ given by contraction with $dF$, where $\mc{T}_S$ is the tangent sheaf.  The \emph{Tyurina ideal (sheaf)} is defined to be $\tau(F) := J(F) + F\cdot \O_S$.  The Tyurina ideal sheaf defines the scheme theoretical singular locus of the zero locus of $F$.  Let $Z$ be the reduced subscheme associated to $\tau(F)$.  Observe that $\tau(F)$ is $\Cx$-equivariant and hence $Z$ is invariant.

We consider the subschemes $Z^{(n)}$ defined by $\tau(F)^n$.  All of these schemes have an action $\Cx$ so that the closed immersions $Z^{(n)} \to S$ are equivariant.  Let $\Zh$ be the formal completion of $S$ along $Z$, where we choose $\tau(F)$ for the ideal of definition.
\begin{dfn}
An equivariant structure on a coherent sheaf $\cE$ on $\Zh$ is, for each $n$, an equivariant structure on $\cE|_{Z^{(n)}}$ such that the equivariant structure on $\cE|_{Z^{(n)}}$ is obtained by restriction from the equivariant structure on $\cE|_{Z^{(n+1)}}$.
\end{dfn}
View $F$ as a function on $\Zh$.  Now we can formulate the correct notion of a graded D-brane on $(\Zh,F)$.
\begin{dfn}
A \emph{graded D-brane on $\Zh$ controlled by $\cL$} is an equivariant vector bundle $\cE$ on $\Zh$ with an endomorphism $d_\cE$ of degree one such that $d_\cE^2 = F \cdot \id_\cE$ and such that for some $m \gg 0$ the natural map $\O_{\Zh} \tensor \Gamma(\Zh,\cE\tensor \cL^{\tensor m}) \to \cE \tensor \cL^{\tensor m}$ is surjective.
\end{dfn}
It remains to construct a DG category.  Let $U' \subset S$ be an invariant open affine and set $U = U' \cap Z$.  Then we define the graded ring
\[
 \O_{\Zh}^{gr}(U) = \bigoplus_{k\in\Z}{\invlim_n (\O_S(U')/\tau(F)^n)_k}
\]
If $\cE$ is an equivariant sheaf on $\Zh$ we can define the $\OZgr(U)$ module
\[
 \cE^{gr}(U) = \bigoplus_{k\in\Z}{\invlim_n (\cE(U')/\tau(F)^n\cE(U'))_k}.
\]
Suppose that $\cE$ and $\cF$ are two equivariant vector bundles on $\Zh$.  There is a natural graded $\OZgr(U)$-module structure on the space of continuous homomorphisms
\[ \Hom_{gr}(\cE,\cF)(U) := \Hom_{cont}(\cE^{gr}(U),\cF^{gr}(U)) \]
There is an alternate description
\[
\Hom_{gr}(\cE,\cF)(U) = \bigoplus_{k\in\Z}\invlim_n \left( \sHom( \cE, \cF )(U')/\tau(F)^n\sHom( \cE, \cF)(U') \right)_k
\]
The endomorphisms of $\cE$ and $\cF$ induce a differential on $\Hom_{gr}(\cE,\cF)(U)$ making it into a complex of $\C$ vector spaces and a DG $\OZgr(U)$-module.  Observe that the formation of $\Hom_{gr}(\cE,\cF)(U)$ is compatible with composition in the sense that there is canonical morphism
\begin{equation}\label{comp}
\Hom_{gr}(\cE_2,\cE_3)(U) \tensor_{\OZgr(U)} \Hom_{gr}(\cE_1,\cE_2)(U) \to \Hom_{gr}(\cE_1,\cE_3)(U)
\end{equation}
of DG $\OZgr(U)$-modules.

Fix a $\Cx$-invariant affine open cover $\{U_\alpha\}$ of $S$.
\begin{dfn}
The category $\DBr(\Zh,F,\cL)$ of graded D-branes on $(\Zh,F)$ controlled by $\cL$ is the DG category whose objects are graded D-branes on $(\Zh,F)$ controlled by $\cL$.  The complex of morphisms between $\cE$ and $\cF$ is the total complex of the bicomplex
\[
 \vC^\bt(\Zh,\{U_\alpha \cap Z\}, \Hom_{gr}(\cE,\cF))_\bt.
\]
Composition is induced by \eqref{comp}
\end{dfn}

Write $j:\Zh \to S$ for the natural morphism of locally ringed spaces.  If $\cE$ is a graded D-brane on $(S,F)$ then $j^*\cE$ is a graded D-brane controlled by $\cL$.  Moreover, if $\cE,\cF$ are two graded D-branes on $(S,F)$ and $U$ is an invariant open affine, there is a map
\[
 j^*\sHom(\cE,\cF)(U) \to \Hom_{gr}(j^*\cE,j^*\cF)(U \cap Z)
\]
of graded $\O_S(U)$ modules that intertwines the natural differentials.  This is compatible with compositions and defines a functor $j^*:\DBr(S,F) \to \DBr(\Zh,F,\cL)$.

\begin{thm}
The completion functor $j^*:\DBr(S,F) \to \DBr(\Zh,F,\cL)$ is a quasi-equivalence.
\end{thm}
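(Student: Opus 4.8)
The plan is to verify the two conditions for $j^*$ to be a quasi-equivalence: that it induces quasi-isomorphisms on all $\Hom$-complexes, and that the induced functor $[\DBr(S,F)]\to[\DBr(\Zh,F,\cL)]$ is essentially surjective.

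\emph{Quasi-full-faithfulness.} The input is the support property proved in Section \ref{s.const}: for graded D-branes $\cE_1,\cE_2$ the cohomology sheaf $\cH(\sHom(\cE_1,\cE_2))$ is coherent and annihilated by the Tyurina ideal $\tau(F)$, hence supported on $Z$. Over an invariant open affine $U'\subseteq S$ the complex $\sHom(\cE_1,\cE_2)(U')$, with the commutator differential, is a complex of finitely generated projective modules over the Noetherian ring $\O_S(U')$ whose cohomology is annihilated by $\tau(F)$; since $\tau(F)$-adic completion of a Noetherian ring is flat and a module killed by $\tau(F)$ is already $\tau(F)$-adically complete, the natural completion map
\[ \sHom(\cE_1,\cE_2)(U')\longrightarrow \Hom_{gr}(j^*\cE_1,j^*\cE_2)(U'\cap Z) \]
is a quasi-isomorphism, compatibly with the composition maps \eqref{comp} and with taking $G$-invariants (completion along the $G$-stable ideal $\tau(F)$ commutes with the exact functor of $G$-invariants). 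Now $\Hom_{\DBr(S,F)}(\cE_1,\cE_2)$ and $\Hom_{\DBr(\Zh,F,\cL)}(j^*\cE_1,j^*\cE_2)$ are the total complexes of the \v{C}ech bicomplexes for the chosen finite covers $\{U_\alpha\}$ and $\{U_\alpha\cap Z\}$ — bicomplexes bounded in the \v{C}ech direction — and $j^*$ is a map of bicomplexes which in each fixed \v{C}ech degree is a finite direct sum of the maps above, hence a quasi-isomorphism with respect to the internal differential. It follows that $j^*$ is a quasi-isomorphism on total complexes. (Equivalently, the two spectral sequences \eqref{main.ss} have canonically isomorphic $E_2$-pages, since $\cH(\sHom(\cE_1,\cE_2))$ is supported on $Z$ and its cohomology is unchanged when $S$ is replaced by a formal neighborhood of $Z$.)

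\emph{Essential surjectivity: reduction.} Let $\cE$ be a graded D-brane on $(\Zh,F)$ controlled by $\cL$. After twisting by $\cL^{\otimes m}$ with $m\gg0$ we may assume $\cE$ is globally generated; choosing finitely many global sections $s_1,\dots,s_N$ that generate it, the map to $\cE$ from a direct sum of $N$ copies of the trivial graded D-brane (underlying bundle $\O_{\Zh}\oplus\O_{\Zh}$, structure maps $\id$ and $F$) that sends the two bases of generators to the $s_i$ and to the $d_\cE s_i$ is a D-brane morphism, surjective on underlying sheaves; after untwisting, its source is $j^*$ of a twisted sum of trivial graded D-branes on $(S,F)$. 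Iterating, and arguing exactly as in the proof of Lemma \ref{generation} using that $S$ is smooth of dimension $n$ (so that $\Zh$ is a regular formal scheme of dimension $n$), we obtain a finite resolution $0\to\cK\to\cP_{n-1}\to\cdots\to\cP_0\to\cE\to0$ in which $\cP_0,\dots,\cP_{n-1}$ are completions of graded D-branes on $(S,F)$ and $\cK$ is a vector bundle, hence again a graded D-brane on $(\Zh,F)$. Totalizing $\cK\to\cP_{n-1}\to\cdots\to\cP_0$ then exhibits $\cE$ as isomorphic, in $[\DBr(\Zh,F,\cL)]$, to a graded D-brane built out of $\cK$ and objects pulled back along $j$.

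\emph{The main obstacle.} The remaining, and principal, difficulty is to algebraize $\cK$ — to realize it as $j^*$ of a graded D-brane on $S$ — after which $\cE$, a totalization of objects pulled back along $j$, is itself in the image of $j^*$ and we are done. I would attack this by a formal-existence argument: choose a projective compactification of $S$ to which $\cL$ extends as an ample line bundle, and bring Grothendieck's existence theorem (formal GAGA) to bear, the hypothesis that $\cK$ is controlled by $\cL$ being precisely what keeps it within reach of formal GAGA; alternatively, in the affine-local picture $\cK$ is a compatible system of vector bundles on the thickenings $Z^{(n)}$ of $Z$ — a finitely presented formal object — which Artin approximation (or Elkik's theorem) algebraizes, after which one globalizes using $\cL$. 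This algebraization is the step I expect to carry the real content of the theorem; everything else is a variant of arguments already made for $(S,F)$.
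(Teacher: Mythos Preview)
Your proof of quasi-full-faithfulness is correct and is essentially the paper's argument: both of you use that $\cH(\sHom(\cE_1,\cE_2))$ is annihilated by $\tau(F)$, that graded $\tau(F)$-adic completion is exact, and then compare the spectral sequences \eqref{main.ss}.

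For essential surjectivity, your reduction leaves a genuine gap, and the paper proceeds differently. You reduce to algebraizing a formal graded D-brane $\cK$ on $\Zh$ and propose formal GAGA or Artin approximation. Neither applies cleanly here: $S$ is only quasi-projective, so formal GAGA would require first compactifying and extending $\cK$ across the boundary (a problem of the same flavor as the one you are trying to solve); Artin/Elkik give \emph{approximate} solutions \'etale-locally, whereas you need to algebraize not just the underlying bundle but the endomorphism $d_{\cK}$ with the exact identity $d_{\cK}^2=F\cdot\id$, and then glue. It is also not clear that $\cK$ remains ``controlled by $\cL$'', so the hypothesis you single out as the key to GAGA may already be lost after taking kernels.

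The paper sidesteps direct algebraization of a D-brane by passing through the singularity category. Given $\cE$ on $\Zh$, set $\wh{\alpha}=\coker(d_{\cE}^+)$, a coherent sheaf on the completion of $V(F)$ along $Z$; choose a coherent equivariant sheaf $\alpha$ on $V(F)$ with $j^*\alpha\cong\wh{\alpha}$. Now resolve $\alpha$ on $V(F)$ by equivariant locally free sheaves $Q_k\to\cdots\to Q_0\to\alpha$; from each $Q_i$ one builds (by the standard ``factor $F$ through the inclusion'' trick) a graded D-brane $M_i$ on $(S,F)$, and the iterated cone $C_1$ of the resulting complex of D-branes comes with a map $j^*C_1\to\cE$. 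The crucial input is Orlov's Theorem~3.10 from \cite{Or}: the cokernel functor $\coker:[\DBr(B,W)]\to D_{Sg}^{gr}(B/WB)$ is a triangulated equivalence for a graded ring $B$ of finite global dimension. Applied locally over each $\O_{\Zh}^{gr}(U)$, this says that the cone on $j^*C_1\to\cE$ has cokernel isomorphic to the acyclic resolution of $\wh{\alpha}$, hence is zero in the singularity category, hence is locally contractible; therefore $j^*C_1\cong\cE$. So the ``real content'' is not a GAGA-type theorem for D-branes but Orlov's equivalence, which lets one recover a D-brane (up to homotopy) from the single coherent sheaf $\coker(d^+)$. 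Your reduction never touches this, and that is the missing idea.
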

\begin{proof}
We must verify that $j^*$ is quasi-fully faithful and quasi-essentially surjective.  To prove that $j^*$ is quasi-fully faithful we will check that
\[
 j^*:\sHom(\cE,\cF)(U_\alpha) \to \Hom_{gr}(j^*\cE,j^*\cF)(U_\alpha \cap Z)
\]
is a quasi-isomorphism.  Since $j^*$ is compatible with the filtrations by \v{C}ech degrees it induces a map of spectral sequences.  When the above map is a quasi-isomorphism for each $\alpha$ the map of spectral sequences becomes an isomorphism at the first page and hence $j^*$ is a quasi-isomorphism.  An exact sequence of graded modules is exact in each homogeneous degree.  Moreover, the inverse systems appearing in the definition of the graded completion satisfy the Mittag-Leffler condition.  Hence, graded completion is exact.  It follows that
\[
 \bigoplus_{k \in \Z} \invlim_n {\cH(\sHom(\cE,\cF)(U'))/\tau^n(F)\cH(\sHom(\cE,\cF))(U')_k} \cong \sfH^*\Hom_{gr}(j^*\cE,j^*\cF)(U)
\]
However, $\tau^n(F)\cH(\sHom(\cE,\cF))(U') = 0$ for all $n \geq 1$ and therefore
\[ \cH(\Hom(\cE,\cF))(U') \to \sfH^*\Hom_{gr}(j^*\cE,j^*\cF)(U) \]
is an isomorphism.

Now we must show that $j^*$ is quasi-essentially surjective.  We will deduce this from Theorem 3.10 of \cite{Or}, which we view as a local statement.  The theorem says that if $B$ is a graded ring of finite homological dimension and $W$ is a homogeneous element then
\[ \coker:\sfH^0\DBr(B,W) \to D_{Sg}^{gr}(B/WB) \]
is a triangulated equivalence.  We can decompose a graded D-brane $\cE$ into its odd and even parts $\cE^0$ and $\cE^1$ and we denote the restriction of $d_\cE$ to $\cE^0$ by $d_\cE^+:\cE^0 \to \cE^1$. The functor in the theorem is given by the assignment $E \mapsto \coker(d_{\cE}^+)$, which Orlov proves descends to a functor $\sfH^0\DBr(B,W) \to D_{Sg}^{gr}(B/WB)$.

Consider a graded D-brane $\cE$ on $(\Zh,F)$ controlled by $\cL$.  Write $V(F)$ for the subscheme defined by $F$ and $(m)$ for tensoring with $\cL^{\tensor m}$.  Let $\wh{\alpha} = \coker(d^+_\cE)$ and let $\alpha$ be a coherent equivariant sheaf on $V(F)$ such that $j^*\alpha = \wh{\alpha}$.  Suppose that
\[
 0 \to Q_k \to Q_{k-1} \to \dotsm \to Q_0 \to \alpha \to 0
\]
is an exact sequence of coherent sheaves on $V(F)$ such that $Q_i$ is locally free and equivariant for $i < k$.  Take $m \gg 0$ such that $Q_i(m)$ and $\cE(m)$ are globally generated.  Choose an equivariant map
\[
 \O_{\Zh}(-m)\tensor_\C \Gamma(S, Q_1(m)) \to \cE_+
\]
such that each square of
\[
 \xymatrix@C=.5cm{
\O_{\Zh}(-m)\tensor_\C \Gamma(S,Q_k(m)) \ar[r]\ar@{->>}[d] & \O_{\Zh}(-m)\tensor_\C \Gamma(S,Q_k(m)) \ar[r]\ar@{->>}[d] & \dotsm \ar[r] & \O_{\Zh}(-m)\tensor_\C \Gamma(S,Q_k(m)) \ar[r]\ar@{->>}[d] & \cE_+ \ar@{->>}[d] \\
j^*Q_k \ar[r] & j^*Q_{k-1} \ar[r] & \dotsm \ar[r] & j^*Q_1 \ar[r] & \alpha
}
\]
commutes and
\[
 \O_{\Zh}(-m)\tensor_\C \Gamma(S, Q_2(m)) \to \O_{\Zh}(-m)\tensor_\C \Gamma(S, Q_1(m)) \to \cE_+
\]
is zero.  Define $P_i = \ker( \O_S(-m)\tensor_\C \Gamma(S,Q_i(m)) \to Q_i)$.  If $i < k$ then $P_i$ is an equivariant vector bundle.  Moreover, if $k \geq \dim(S)-1$ then $P_k$ is also locally free.  Note that $P_k$ fits into an exact sequence
\[ 0 \to P_k \to \O_S(-m)\tensor_\C \Gamma(S,Q_k(m)) \to Q_{k-1} \to \dotsc \to Q_0 \to \alpha \to 0. \]
Now, the two term resolutions
\[ 0 \to P_i \to \O_S(-m)\tensor_\C \Gamma(S,Q_i(m)) \to Q_i \to 0 \]
imply that for any quasi-coherent sheaf $\beta$ on $S$, $\sExt^m(Q_i,\beta) = 0$ if $m > 1$.  Therefore
\[
 \sExt^1(P_k,\beta) \cong \sExt^{k+2}(\alpha,\beta) = 0
\]
and $P_k$ is locally free.

Suppose that $\phi: P \to Q(1)$ is an injective equivariant map of equivariant vector bundles on $S$ with the property that $F Q \subset \phi(P)$.  Over an invariant affine open, $P$ and $Q$ are graded projective modules.  We can define a map in the opposite direction $\psi:Q \to P(1)$ by $\psi(q) = \phi^{-1}(Wq)$.  This new map is equivariant and by construction $\phi \circ \psi = W \cdot \id_Q$ and $\psi \circ \phi = W \cdot \id_P$.  It is the unique such map and hence all of the local maps patch together to give a map $\psi:Q \to P(1)$.  So for each $i$ there is a unique equivariant arrow $\O_S(-m)\tensor_\C\Gamma(S,Q_i(m)) \to P_i$ which gives $\O_S(-m)\tensor_\C\Gamma(S,Q_i(m)) \oplus P_i[1]$ the structure of a graded D-brane which we denote $M_i$.  Observe that for $1 \leq i < k$, $M_i$ is locally contractible  since the cokernel of $ P_i \to \O_S(-m)\tensor_\C \Gamma(S,Q_i(m))$ is locally free on $V(F)$.

Let $C_{k-1} = \cn(M_k \to M_{k-1})$.  Since $M_{i+2} \to M_{i+1} \to M_i$ is the zero map we can inductively define $C_i = \cn(C_{i+1} \to M_i)$.  Since $M_i$ is contractible if $i < k$, the natural map $C_i \to C_{i+1}[1]$ is an isomorphism in the homotopy category.  Now there is a map $j^*C_1 \to \cE$.  Consider the cone $C = \cn(j^*C_1 \to \cE)$.

Let $U' \subset S$ be an invariant affine open set and $U = U' \cap Z$.  Then since the functor
\[ \coker:\sfH^0\DBr(\OZgr(U),F) \to D_{Sg}^{gr}(\OZgr(U)/(F)) \]
is triangulated, it follows from the construction of $C$ as an iterated cone that $\coker(C)$ is isomorphic to the acyclic complex
\[
 0 \to Q_k \to Q_1 \to \dotsm \to Q_1 \to \alpha \to 0
\]
which is itself isomorphic to zero.  Since $\coker$ is fully faithful, this implies that $C(U)$ is itself contractible.  Now since $C$ is locally contractible it is zero in the homotopy category $\sfH^0\DBr(\Zh,F)$.  This means that $j^*C_1$ is isomorphic to $\cE$ in the homotopy category.  Hence $j^*$ is quasi-essentially surjective.
\end{proof}

\begin{rmk}
Orlov \cite{Or4} has obtained a similar theorem in the case of categories of singularities.
\end{rmk}

\bibliographystyle{alpha}
\bibliography{branes.bib}

\end{document}